\documentclass[10pt,a4paper]{article}
\usepackage[utf8]{inputenc}
\usepackage{amsmath}
\usepackage{amsfonts}
\usepackage{amssymb}
\usepackage{amsthm}
\usepackage{graphicx}
\usepackage{verbatim}
\usepackage{caption}
\usepackage{subcaption}
\usepackage{color}

\newtheoremstyle{firststyle}  
  {5mm}       
  {3mm}       
  {\itshape}   
  {}        
  {\bfseries}  
  {}   
  {3mm}       
  {}           

\theoremstyle{firststyle}
\newtheorem{theorem}{Theorem}[section]
\newtheorem{lemma}[theorem]{Lemma}
\newtheorem{proposition}[theorem]{Proposition}

\newtheoremstyle{secondstyle}  
  {3mm}       
  {3mm}       
  {\normalfont}   
  {}        
  {\bfseries}  
  {\quad}   
  {1mm}       
  {}           

\theoremstyle{secondstyle}

\newtheorem{remark}{Remark}
\newtheorem{assumption}{Assumption}

\usepackage{booktabs} 
\captionsetup{font=small,labelfont=bf,labelsep=quad,justification=raggedright,singlelinecheck=false}

\title{Error analysis for global minima of semilinear optimal control problems} 
\author{Ahmad Ahmad Ali\footnote{Schwerpunkt Optimierung und Approximation, 
		Universit\"at Hamburg, Bundesstra{\ss}e 55, 20146 Hamburg, Germany.}, Klaus Deckelnick\footnote{Institut f\"ur Analysis und Numerik,
		Otto--von--Guericke--Universit\"at Magdeburg, Universit\"atsplatz 2,
		39106 Magdeburg, Germany} \& Michael Hinze\footnote{Schwerpunkt Optimierung und Approximation, 
		Universit\"at Hamburg, Bundesstra{\ss}e 55, 20146 Hamburg, Germany.}}

\date{}

\begin{document}
\maketitle
\begin{center}
 {\it  Dedicated to Eduardo Casas on the occasion of his 60th birthday.}
\end{center}

{\small {\bf Abstract:} In \cite{ali2016global} we consider an optimal control problem subject to a semilinear elliptic PDE together with its variational discretization, where we provide a condition which allows to decide whether a solution of the necessary first order conditions is a global minimum. This condition can be explicitly evaluated at the discrete level. Furthermore, we prove that if the above condition holds uniformly with respect to the discretization parameter the sequence of discrete solutions converges to a global solution of the corresponding limit problem. With the present work we complement our investigations of \cite{ali2016global} in that we prove an error estimate for those discrete global solutions. Numerical experiments confirm our analytical findings.}
\\[2mm]
{\small {\bf Mathematics Subject Classification (2000): 49J20, 35K20, 49M05, 49M25, 49M29, 65M12, 65M60}} \\[2mm]
{\small {\bf Keywords: Optimal control, semilinear PDE, uniqueness of global solutions, error estimates} }

\section{Introduction}
In this work we are concerned with  the error analysis of a variational discretization  of the control problem
\[
(\mathbb{P}) \quad
\min_{u \in U_{ad}} J(u):=\frac{1}{2} \Vert y-y_0 \Vert_{L^2(\Omega)}^2  + \frac{\alpha}{2} \Vert u \Vert_{L^2(\Omega)} ^2
\]
subject to 
\begin{eqnarray}
-\Delta y +\phi(y) &=u  \quad \mbox{ in }  \Omega, \label{semilinear}\\
y &=0  \quad \mbox{ on } \partial\Omega,  \label{bc}  
\end{eqnarray}
and the pointwise constraints
\begin{alignat*}{2}
& u_a \leq u(x) \leq u_b      \qquad  &&\mbox{for  a.e. }  x \in \Omega,\\
& y_a(x) \leq y(x)\leq y_b(x)  \qquad &&\forall \, x \in K \subset \Omega,
\end{alignat*}
where the precise assumptions on the data of the problem will be given in Section~\ref{sec:problem setting}. In \cite{ali2016global} the authors considered  the same class of problems and established a sufficient condition for the global minima of $(\mathbb{P})$ assuming particular types of growth conditions for the nonlinearity $\phi$. The same result was established for the variational discrete counterpart of $(\mathbb{P})$, and it was shown that a sequence of the computed discrete global minima  converges to a global minimum of the continuous control problem but without discussing the corresponding rate of convergence. Hence, our aim in this study is to investigate this convergence rate.

The organization of the paper is as follows: in \S ~\ref{sec:problem setting} we formulate the control problem and give the exact assumptions on the data. In \S~\ref{sec:state} and in \S~\ref{sec:control problem} we review the results concerning the state equation and the control problem $(\mathbb{P})$, respectively. The variational discretization of $(\mathbb{P})$ is considered in \S~\ref{sec:variational} while \S~\ref{sec:error} is devoted to the error analysis. Finally, in \S~\ref{sec:numerics} we verify our theoretical findings by a numerical example.

Before starting, we give a short list of literature considering the problem $(\mathbb{P})$. For a broad overview, we refer the reader to the references of the respective citations. In \cite{casas1993boundary} the problem  $(\mathbb{P})$ is studied when the controls are of boundary type, and the necessary first order conditions are established. Compare \cite{casas1986control} where the function $\phi$ is linear, and \cite{casas1993optimal} where the pointwise constraints are imposed on the gradient of the state.

The regularity of the optimal controls  of $(\mathbb{P})$ and their associated multipliers are investigated in \cite{casas2010recent} and  \cite{casas2014new}, where also the sufficient second order conditions are discussed. Compare  \cite{casas2002second,casas2008necessary,casas2008sufficient} for second order conditions when the set $K$ contains  finitely/infinitely many points, and 
 \cite{casas2015second} for the role of those conditions in PDE constrained control problems.

Finite element discretization of problem $(\mathbb{P})$ under more general setting  is studied in \cite{casas2002uniform}, and in \cite{hinze2012stability} where a wider class of perturbations are considered. The convergence of the discrete solutions to the continuous solutions is verified there but without rates. However, when the set $K$ contains finitely many points, convergence rates are  established in  \cite{merino2010error} for finite dimensional controls, and in \cite{casas2002error} for control functions. Only in \cite{neitzel2015finite} error analysis is studied for general pointwise state constraints in $K$. There, Pfefferer at al. prove an error estimate for discrete solutions in the vicinity of a local solution which satisfies a quadratic growth condition. Error analysis for linear-quadratic control problems  can be found in e.g. \cite{casas2014new}, \cite{deckelnick2007convergence} and \cite{meyer2008error}. A detailed discussion of discretization concepts and error analysis in PDE-constrained control problems can be found in \cite{hinze2012discretization,hinze2010discrete} and\cite[Chapter~3]{pinnau2008optimization}.

\section{Problem Setting and discretization}

\subsection{Assumptions}
\label{sec:problem setting}

\begin{itemize}
\item $\Omega \subset \mathbb{R}^2$ is a bounded, convex and polygonal domain.
\item $K$ is a (possibly empty) compact subset of $\Omega$.
\item $u_a \in \mathbb{R} \cup \{-\infty\}$ and $u_b \in \mathbb{R} \cup \{\infty\}$ with $u_a \leq u_b$.
\item $y_a,y_b \in C_0(\Omega)\cap W^{2,\infty}(\Omega)$ are given functions that satisfy $y_a(x)<y_b(x)$, $x \in K$.
\item $y_0 \in L^2(\Omega)$ and $\alpha >0$ are given.
\item $\phi: \mathbb{R} \to \mathbb{R}$ is of class $C^2$ and monotonically increasing. 
\item There exist $r>1$ and $M \geq 0$ such that
\begin{align} 
|\phi''(s)| \leq M \phi'(s)^{\frac{1}{r}} \quad \mbox{ for all } s \in \mathbb{R}, \label{assumption:D2phi}
\end{align}
where $\phi'$ and $\phi''$ denote the first and second derivative of $\phi$, respectively.
\end{itemize}

\subsection{The State Equation}
\label{sec:state}

Recall that a function $y \in H^1_0(\Omega)$ is called a weak solution of \eqref{semilinear}, \eqref{bc} if
\begin{align}
\int_\Omega \nabla y   \cdot  \nabla v +  \phi(y) v \, dx = \int_\Omega u v \,dx \quad \forall \, v \in H^1_0(\Omega). \label{WPDE:semilinear state}
\end{align}

\begin{theorem}
\label{Thm:H2 regularity state}
For every  $u \in L^2(\Omega)$ the boundary value problem \eqref{semilinear}, \eqref{bc} admits a unique weak solution $y \in H^1_0(\Omega)\cap H^2(\Omega)$. Moreover, there exists $c>0$ such that 
\begin{align}
\| y\|_{H^2(\Omega)} \leq c \bigl( 1+ \| u\|_{L^2(\Omega)} \bigr). \label{estimate:H2 state}
\end{align}
\end{theorem}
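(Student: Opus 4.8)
The plan is to separate the argument into three stages: existence and uniqueness of a weak solution, the qualitative regularity $y\in H^2(\Omega)$, and finally the quantitative bound \eqref{estimate:H2 state}. I would first establish existence and uniqueness directly at the level of \eqref{WPDE:semilinear state}. Since $\phi$ is monotonically increasing, the operator $A\colon H^1_0(\Omega)\to H^{-1}(\Omega)$ associated with the left-hand side of \eqref{WPDE:semilinear state} is strictly monotone; testing with $v=y$, using $\int_\Omega(\phi(y)-\phi(0))\,y\,dx\ge 0$ (a consequence of monotonicity) and Poincar\'e's inequality shows that $A$ is coercive. Equivalently, one minimises the strictly convex, coercive functional $y\mapsto \tfrac12\|\nabla y\|_{L^2(\Omega)}^2+\int_\Omega\Phi(y)\,dx-\int_\Omega uy\,dx$ with $\Phi'=\phi$. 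Either route yields a unique $y\in H^1_0(\Omega)$, and the same test function delivers the preliminary bound $\|y\|_{H^1(\Omega)}\le c(1+\|u\|_{L^2(\Omega)})$. Here one must know that $v\mapsto\phi(v)$ maps $H^1_0(\Omega)$ into some $L^p(\Omega)\hookrightarrow H^{-1}(\Omega)$ with $p>1$, which is where the structural assumption \eqref{assumption:D2phi} first enters.

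Next I would extract from \eqref{assumption:D2phi} that $\phi$ grows at most polynomially. Writing $g=\phi'\ge 0$, the inequality $g'\le M g^{1/r}$ integrates to $g(s)^{(r-1)/r}\le c(1+|s|)$, hence $\phi'(s)\le c(1+|s|)^{r/(r-1)}$ and $|\phi(s)|\le c(1+|s|)^{r/(r-1)+1}$. Since $\Omega\subset\mathbb{R}^2$ gives $H^1_0(\Omega)\hookrightarrow L^q(\Omega)$ for every $q<\infty$, this polynomial bound yields $\phi(y)\in L^2(\Omega)$. Rewriting the equation as $-\Delta y=u-\phi(y)\in L^2(\Omega)$ and invoking elliptic regularity on the convex polygonal domain $\Omega$ then gives $y\in H^2(\Omega)$ qualitatively. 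This is the step I expect to be the main obstacle: because $H^1_0(\Omega)\not\hookrightarrow L^\infty(\Omega)$ in two dimensions, membership $\phi(y)\in L^2(\Omega)$ is not automatic from continuity of $\phi$ alone and genuinely relies on the growth encoded in \eqref{assumption:D2phi}.

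Finally I would upgrade the qualitative statement to the linear estimate \eqref{estimate:H2 state}. Now that $y\in H^2(\Omega)\hookrightarrow C^0(\overline{\Omega})$ is bounded, $\phi'(y)$ is bounded and $\phi'(y)\nabla y\in L^2(\Omega)$, so $v:=\phi(y)-\phi(0)$ is an admissible test function in $H^1_0(\Omega)$. Inserting it into \eqref{WPDE:semilinear state} and discarding the nonnegative term $\int_\Omega\phi'(y)|\nabla y|^2\,dx\ge0$ leaves $\|\phi(y)\|_{L^2(\Omega)}^2\le \phi(0)\int_\Omega\phi(y)\,dx+\int_\Omega u\,\phi(y)\,dx-\phi(0)\int_\Omega u\,dx$; absorbing the terms containing $\phi(y)$ by Young's inequality produces $\|\phi(y)\|_{L^2(\Omega)}\le c(1+\|u\|_{L^2(\Omega)})$, linearly in $\|u\|_{L^2(\Omega)}$. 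The crucial point is that monotonicity of $\phi$ renders the gradient term nonnegative, so that no Sobolev-embedding power of $\|y\|_{H^1(\Omega)}$ — which would only give a superlinear bound — is incurred. Combining this with the convex-domain bound $\|y\|_{H^2(\Omega)}\le c\|\Delta y\|_{L^2(\Omega)}\le c\bigl(\|u\|_{L^2(\Omega)}+\|\phi(y)\|_{L^2(\Omega)}\bigr)$ yields \eqref{estimate:H2 state} and completes the proof.
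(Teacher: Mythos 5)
Your proposal is correct, and its overall skeleton (monotone operator theory for existence and uniqueness, then $\phi(y)\in L^2(\Omega)$, then linear elliptic regularity on the convex polygon, then a bound on $\|\phi(y)\|_{L^2(\Omega)}$ that is linear in $\|u\|_{L^2(\Omega)}$) matches the paper's, which however only sketches the argument and refers to \cite{casas1993boundary} and \cite{grisvard2011elliptic} for details. The one place where you genuinely deviate is the intermediate integrability step: the paper obtains $y\in L^\infty(\Omega)$ by Stampacchia's truncation method and only then concludes $\phi(y)\in L^2(\Omega)$, whereas you bypass the $L^\infty$ bound by integrating \eqref{assumption:D2phi} to get the polynomial growth $|\phi(s)|\le c(1+|s|)^{r/(r-1)+1}$ and combining it with the two--dimensional embedding $H^1_0(\Omega)\hookrightarrow L^q(\Omega)$, $q<\infty$. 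Your route is shorter and self-contained under the standing assumptions, but it is tied to $n=2$ and to the growth encoded in \eqref{assumption:D2phi}; Stampacchia's method works in any dimension for $u\in L^p(\Omega)$ with $p>n/2$ and for general monotone nonlinearities without growth restrictions, which is why the cited reference uses it. Your final step — testing \eqref{WPDE:semilinear state} with $v=\phi(y)-\phi(0)$ and discarding the nonnegative term $\int_\Omega\phi'(y)|\nabla y|^2\,dx$ to obtain $\|\phi(y)\|_{L^2(\Omega)}\le c(1+\|u\|_{L^2(\Omega)})$ \emph{linearly} — is exactly the device needed to make \eqref{estimate:H2 state} linear rather than superlinear in $\|u\|_{L^2(\Omega)}$, and you are right to flag it as the crucial point; the paper leaves it implicit. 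The only details you should still supply are the justification that $g^{(r-1)/r}$ is globally Lipschitz even where $\phi'$ vanishes (a standard Bihari-type comparison, since $(r-1)/r\in(0,1)$) and the continuity of the Nemytskii operator $y\mapsto\phi(y)$ from $H^1_0(\Omega)$ into $H^{-1}(\Omega)$ needed for the Browder--Minty theorem; neither is a gap of substance.
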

\begin{proof}
The existence and uniqueness of the solution $y$ in $H^1_0(\Omega)$ follows from the monotone operator theorem. Using the method of Stampacchia one can show, in addition, that $y \in L^\infty(\Omega)$. Utilizing the boundedness of $y$ and the properties of the nonlinearity $\phi$, one can show   $y \in H^2(\Omega)$ and the estimate \eqref{estimate:H2 state} using the regularity results from \cite[Chapter~4]{grisvard2011elliptic}. For a detailed proof compare for instance \cite{casas1993boundary}.
\end{proof}

In the light of Theorem~\ref{Thm:H2 regularity state}, we introduce the control--to--state operator 
\begin{align}
\mathcal{G}: L^2(\Omega) \to   H^1_0(\Omega) \cap H^2(\Omega) \label{solution operator}
\end{align}
such that $y:=\mathcal{G}(u)$ is the solution to \eqref{WPDE:semilinear state} for a given $u \in L^2(\Omega)$.

\begin{lemma}
\label{Thm:lipschitz map}
Let $\mathcal{G}$ be the mapping introduced in  \eqref{solution operator}. Then there exists $c>0$ depending only on $\Omega$ such that
\begin{align*}
\|\mathcal{G}(u)-\mathcal{G}(v)\|_{L^2(\Omega)} \leq c \|u-v\|_{L^2(\Omega)} \qquad \forall \, u,v \in L^2(\Omega).
\end{align*}
\end{lemma}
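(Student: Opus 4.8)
The plan is to exploit the monotonicity of $\phi$, which is the decisive structural assumption: it makes the nonlinear term carry a favorable sign so that it can simply be discarded. Writing $y_1 := \mathcal{G}(u)$ and $y_2 := \mathcal{G}(v)$ and setting $w := y_1 - y_2 \in H^1_0(\Omega)$, I would first subtract the two weak formulations \eqref{WPDE:semilinear state} against a common test function to obtain
\begin{align*}
\int_\Omega \nabla w \cdot \nabla \psi + \bigl(\phi(y_1) - \phi(y_2)\bigr)\psi \, dx = \int_\Omega (u-v)\psi \, dx
\end{align*}
for every $\psi \in H^1_0(\Omega)$.

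The key step is to test this identity with $\psi = w$ itself. Since $\phi$ is monotonically increasing, the integrand $\bigl(\phi(y_1)-\phi(y_2)\bigr)(y_1-y_2)$ is pointwise nonnegative, so that term may be dropped to yield the one-sided inequality
\begin{align*}
\int_\Omega |\nabla w|^2 \, dx \leq \int_\Omega (u-v) w \, dx.
\end{align*}

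From here the argument is routine. By Cauchy--Schwarz the right-hand side is bounded by $\|u-v\|_{L^2(\Omega)} \|w\|_{L^2(\Omega)}$, and the Poincar\'e inequality $\|w\|_{L^2(\Omega)} \leq c_P \|\nabla w\|_{L^2(\Omega)}$, with $c_P$ depending only on $\Omega$, lets me absorb one factor of $\|\nabla w\|_{L^2(\Omega)}$ to conclude $\|\nabla w\|_{L^2(\Omega)} \leq c_P \|u-v\|_{L^2(\Omega)}$. A final application of Poincar\'e then gives $\|w\|_{L^2(\Omega)} \leq c_P^2 \|u-v\|_{L^2(\Omega)}$, which is exactly the claim with $c = c_P^2$ depending only on $\Omega$.

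I do not anticipate a genuine obstacle here; the only point requiring attention is to recognize that monotonicity renders the nonlinear contribution nonnegative rather than merely Lipschitz, which is precisely why the constant can be taken independent of $\phi$ and of the particular data $u,v$. Had $\phi$ only been assumed Lipschitz one would need to estimate the nonlinear term directly and track its modulus, but under the stated monotonicity the bound is immediate and the dependence of $c$ on $\Omega$ alone follows without further work.
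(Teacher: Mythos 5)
Your proof is correct and follows essentially the same route as the paper: test the difference of the weak formulations with $w = \mathcal{G}(u)-\mathcal{G}(v)$, discard the nonnegative monotone term, and combine Cauchy--Schwarz with Poincar\'e's inequality. The paper merely arranges the same chain of inequalities starting from $\|w\|_{L^2(\Omega)}^2 \leq c\,\|\nabla w\|_{L^2(\Omega)}^2$ rather than isolating $\|\nabla w\|_{L^2(\Omega)}$ first, which is an immaterial difference.
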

\begin{proof}
Given $u,v \in L^2(\Omega)$ let $y_u:=\mathcal{G}(u)$ and $y_v:=\mathcal{G}(v)$. Using Poincar\'{e}'s inequality, the monotonicity of $\phi$ and \eqref{WPDE:semilinear state} we have
\begin{eqnarray*}
\lefteqn{
\|y_u - y_v\|^2_{L^2(\Omega)}  \leq c\int_{\Omega} |\nabla(y_u-y_v)|^2 \, dx } \\
& \leq &  \int_{\Omega} |\nabla(y_u-y_v)|^2  + [\phi(y_u)-\phi(y_v)](y_u-y_v) \, dx \\
& = &  \int_{\Omega} (u-v)(y_u-y_v) \, dx  \leq \|u-v\|_{L^2(\Omega)} \|y_u-y_v\|_{L^2(\Omega)},
\end{eqnarray*}
which implies the result.
\end{proof}

\begin{lemma}
\label{Thm:H2 lipschitz}
Let $\mathcal{G}$ be the mapping introduced in \eqref{solution operator}. Then for any $m>0$ there exists $L(m)>0$ such that
\begin{align*}
\|\mathcal{G}(u)-\mathcal{G}(v)\|_{H^2(\Omega)} \leq L(m) \|u-v\|_{L^2(\Omega)} 
\end{align*}
for all $u$, $v \in L^2(\Omega)$ with $\|u\|_{L^2(\Omega)}$, $\|v\|_{L^2(\Omega)} \leq m$.
\end{lemma}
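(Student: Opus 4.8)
The plan is to differentiate the defining equation and apply elliptic $H^2$-regularity for the Laplacian on the convex polygonal domain $\Omega$, treating the nonlinear term as a perturbation that can be controlled in $L^2$ by means of the already established $L^2$-Lipschitz bound of Lemma~\ref{Thm:lipschitz map}.

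First I would set $y_u := \mathcal{G}(u)$, $y_v := \mathcal{G}(v)$ and $w := y_u - y_v$. Subtracting the two instances of \eqref{WPDE:semilinear state} shows that $w$ solves $-\Delta w + \phi(y_u) - \phi(y_v) = u - v$ with homogeneous Dirichlet data. Writing $\phi(y_u) - \phi(y_v) = a\, w$ with $a(x) := \int_0^1 \phi'\bigl(y_v(x) + t(y_u(x) - y_v(x))\bigr)\,dt \geq 0$, where nonnegativity follows from the monotonicity of $\phi$, I rearrange to $-\Delta w = (u - v) - a\, w$ and invoke the $H^2$-regularity estimate $\|w\|_{H^2(\Omega)} \leq c\,\|\Delta w\|_{L^2(\Omega)}$ valid on the convex polygonal domain.

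The key step is to bound $\|a\,w\|_{L^2(\Omega)}$. To this end I would first use Theorem~\ref{Thm:H2 regularity state} to obtain $\|y_u\|_{H^2(\Omega)}, \|y_v\|_{H^2(\Omega)} \leq c(1+m)$; since $\Omega \subset \mathbb{R}^2$ the embedding $H^2(\Omega) \hookrightarrow C(\overline{\Omega})$ yields a uniform bound $\|y_u\|_{L^\infty(\Omega)}, \|y_v\|_{L^\infty(\Omega)} \leq C(m)$, so that every convex combination appearing in $a$ stays in the compact interval $[-C(m), C(m)]$. As $\phi \in C^2$, its derivative $\phi'$ is bounded on this interval, whence $0 \leq a(x) \leq \tilde{M}(m) := \max_{|s| \leq C(m)} \phi'(s)$. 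Combining this pointwise bound with the $L^2$-Lipschitz estimate of Lemma~\ref{Thm:lipschitz map} gives $\|a\,w\|_{L^2(\Omega)} \leq \tilde{M}(m)\,\|w\|_{L^2(\Omega)} \leq c\,\tilde{M}(m)\,\|u-v\|_{L^2(\Omega)}$.

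Collecting the estimates produces $\|w\|_{H^2(\Omega)} \leq c\bigl(1 + \tilde{M}(m)\bigr)\|u-v\|_{L^2(\Omega)}$, which is the claim with $L(m) := c(1 + \tilde{M}(m))$. The only delicate point is the uniform $L^\infty$ control of the states: this is precisely where the two-dimensional setting and the $H^2$-bound from Theorem~\ref{Thm:H2 regularity state} are essential, since without them $\phi'$ evaluated along the states need not be bounded and the perturbation term could not be absorbed. Note that the growth assumption \eqref{assumption:D2phi} on $\phi''$ is not needed here; it presumably enters only when the second-order differentiability of $\mathcal{G}$ is analysed.
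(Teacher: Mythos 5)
Your proposal is correct and follows essentially the same route as the paper: subtract the two state equations, apply the $H^2$ a priori estimate for the Dirichlet Laplacian on the convex domain, bound the nonlinear difference $\phi(y_u)-\phi(y_v)$ in $L^2$ using the uniform $L^\infty$ bound on the states (via Theorem~\ref{Thm:H2 regularity state} and $H^2(\Omega)\hookrightarrow C(\bar\Omega)$), and close with Lemma~\ref{Thm:lipschitz map}. The only cosmetic difference is that you express the Lipschitz bound on $\phi$ through the mean-value coefficient $a(x)=\int_0^1\phi'(\cdot)\,dt$, whereas the paper simply invokes the Lipschitz continuity of $\phi$ on bounded sets.
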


\begin{proof}
Defining again $y_u:=\mathcal{G}(u), \, y_v:=\mathcal{G}(v)$ we infer from Theorem~\ref{Thm:H2 regularity state} and the continuous embedding $H^2(\Omega) \hookrightarrow C(\bar\Omega)$ 
that $\|y_u\|_{L^\infty(\Omega)}$, $\|y_v\|_{L^\infty(\Omega)} \leq c_m$ for some $c_m >0$ depending on $m$. Clearly,
$y_u-y_v$ belongs to $H^1_0(\Omega) \cap H^2(\Omega)$ and satisfies 
\begin{align*}
-\Delta (y_u - y_v) = (u_1-u_2)-[\phi(y_u)-\phi(y_v)] \quad \mbox{ in }\Omega.
\end{align*}
Using a standard a--priori estimate, the Lipschitz continuity of $\phi$ on bounded sets and Lemma \ref{Thm:lipschitz map} we infer that 
\begin{align*}
\|y_1 - y_2\|_{H^2(\Omega)} & \leq c \big(\|u - v\|_{L^2(\Omega)}+\|\phi(y_u)-\phi(y_v)\|_{L^2(\Omega)} \big) \\
& \leq L(m) \big(\|u - v\|_{L^2(\Omega)}+\|y_u - y_v\|_{L^2(\Omega)} \big) \\
& \leq L(m) \|u - v\|_{L^2(\Omega)},
\end{align*}
where $L(m)$ is a constant depending on $m$. This completes the proof.
\end{proof}

\subsection{The Optimal Control Problem $(\mathbb{P})$}
\label{sec:control problem}

Using the control-to-state operator $\mathcal{G}$  defined in \eqref{solution operator}, the reduced form of  our optimal control problem reads
\[
(\mathbb{P}) \quad
\begin{array}{l}
\min_{u \in U_{ad}} J(u):=\frac{1}{2} \Vert y-y_0 \Vert_{L^2(\Omega)}^2  + \dfrac{\alpha}{2} \Vert u \Vert_{L^2(\Omega)} ^2 \\
\mbox{subject to } y=\mathcal{G}(u) \mbox{ and } y|_K \in Y_{ad},
\end{array}
\]
where 
\begin{align*}
 U_{ad} &:=\{ v \in L^2(\Omega) : u_a \leq v(x) \leq u_b \mbox{ a.e. in } \Omega \},\\
Y_{ad} &:=\{ z \in C(K): y_a(x)\leq z(x)\leq y_b(x) \mbox{ for all }  x \in K \}.
\end{align*}
It is well--known that $(\mathbb{P})$ admits at least one solution provided that a feasible point exists (compare \cite{casas1993boundary}).  Moreover, if a solution of $(\mathbb{P})$ satisfies some constraint qualification, then one can guarantee the existence of a multiplier associated with the pointwise state constraints and the necessary first order conditions can be established. 
A typical constraint qualification for a local solution $\bar u$ of problem $(\mathbb{P})$ is the linearized Slater condition which reads: there exist $u_0 \in U_{ad}$ and $\delta >0$ such that
\begin{align}
 y_a(x) +  \delta \leq \mathcal{G}(\bar u)(x)+ \mathcal{G}'(\bar u)(u_0 - \bar u)(x) \leq y_b(x)-\delta \qquad \forall \, x \in K. \label{slater condition}
\end{align}
The next result is a consequence of \cite[Theorem~5.2]{casas1993boundary}.
\begin{theorem}
Let $\bar u \in U_{ad}$ be a local solution of problem $(\mathbb{P})$ satisfying \eqref{slater condition}. Then there exist  $\bar p \in W^{1,s}_0(\Omega)$ for $1<s<2$ and a regular Borel measure $\bar \mu \in \mathcal{M}(K)$ such that with $\bar y \in H^1_0(\Omega)\cap H^2(\Omega)$ there holds
\begin{align}
&\int_\Omega \nabla \bar y \cdot \nabla v + \phi(\bar y)v \, dx = \int_\Omega \bar u v \, dx \quad \forall \, v \in H^1_0(\Omega),  \qquad \bar y_{|K} \in Y_{ad}, \label{oc:state} \\
&\int_\Omega  \bar p (- \Delta v)  + \phi'(\bar y)\bar p v \, dx  
   \nonumber\\
& \enspace =\int_\Omega (\bar y -y_0) v \, dx   +   \int_K v \, d\bar\mu  \quad \forall \,v \in H^1_0(\Omega) \cap H^2(\Omega), \label{oc:adjoint} \\
&\int_\Omega (\bar p + \alpha \bar u)(u-\bar u) \, dx  \geq  0 \qquad \forall \, u \in U_{ad}, \label{oc:VI control}\\
&\int_K (z-\bar y)\, d\bar\mu  \leq  0 \qquad \forall \, z \in Y_{ad}. \label{oc:VI measure}
\end{align}
\end{theorem}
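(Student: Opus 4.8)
The plan is to treat $(\mathbb{P})$ as an abstract constrained minimization problem and to invoke the generalized Karush--Kuhn--Tucker theorem of \cite[Theorem~5.2]{casas1993boundary}, after which the remaining work is to translate the abstract multiplier into the concrete adjoint system. First I would record that the control-to-state map $\mathcal{G}:L^2(\Omega)\to H^1_0(\Omega)\cap H^2(\Omega)$ is of class $C^1$ (indeed $C^2$, by the regularity of $\phi$), with $z=\mathcal{G}'(\bar u)h$ characterized as the weak solution of the linearized equation $-\Delta z+\phi'(\bar y)z=h$, $z_{|\partial\Omega}=0$. This follows from the implicit function theorem applied to \eqref{WPDE:semilinear state}, using the monotonicity of $\phi$ and the regularity of Theorem~\ref{Thm:H2 regularity state} and Lemma~\ref{Thm:H2 lipschitz}. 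Consequently the reduced objective $J$ is $C^1$ on $L^2(\Omega)$, and via the embedding $H^2(\Omega)\hookrightarrow C(\bar\Omega)$ the constraint map $u\mapsto\mathcal{G}(u)_{|K}$ is $C^1$ with values in $C(K)$.

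Next I would set up the feasible set as $u\in U_{ad}$ together with $\mathcal{G}(u)_{|K}\in Y_{ad}$, where $U_{ad}\subset L^2(\Omega)$ and $Y_{ad}\subset C(K)$ are closed and convex. The linearized Slater condition \eqref{slater condition} is precisely the constraint qualification needed to secure a nontrivial multiplier for the state constraint. Applying \cite[Theorem~5.2]{casas1993boundary} then yields a regular Borel measure $\bar\mu\in\mathcal{M}(K)=C(K)^*$, the state relation \eqref{oc:state} with $\bar y=\mathcal{G}(\bar u)$, the abstract stationarity condition
\begin{align*}
J'(\bar u)(u-\bar u)+\int_K \mathcal{G}'(\bar u)(u-\bar u)\,d\bar\mu\ge 0\qquad\forall\,u\in U_{ad},
\end{align*}
and the complementarity relation $\int_K(z-\bar y)\,d\bar\mu\le 0$ for all $z\in Y_{ad}$, which is already \eqref{oc:VI measure}.

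It then remains to introduce the adjoint state and to rewrite the stationarity condition in the form \eqref{oc:VI control}. I would define $\bar p$ as the very weak (transposition) solution of \eqref{oc:adjoint}, i.e.\ as the representative of the bounded linear functional $v\mapsto\int_\Omega(\bar y-y_0)v\,dx+\int_K v\,d\bar\mu$ acting on $H^1_0(\Omega)\cap H^2(\Omega)$. Testing \eqref{oc:adjoint} with $v=z=\mathcal{G}'(\bar u)(u-\bar u)$ and using the linearized equation satisfied by $z$ gives
\begin{align*}
\int_\Omega(\bar y-y_0)z\,dx+\int_K z\,d\bar\mu=\int_\Omega \bar p\,(u-\bar u)\,dx,
\end{align*}
so that the abstract stationarity condition collapses to $\int_\Omega(\bar p+\alpha\bar u)(u-\bar u)\,dx\ge 0$, which is \eqref{oc:VI control}.

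The main obstacle is the adjoint regularity step. One must justify existence of the transposition solution and the bound $\bar p\in W^{1,s}_0(\Omega)$ for data that are only measures. Here the two-dimensionality of $\Omega$ is essential: since $W^{1,s'}_0(\Omega)\hookrightarrow C(\bar\Omega)$ for $s'>2$, every measure in $\mathcal{M}(K)$ defines an element of $W^{-1,s}(\Omega)$ with $\tfrac1s+\tfrac1{s'}=1$, and elliptic regularity for the coercive operator $-\Delta+\phi'(\bar y)\cdot$ (coercive because $\phi'\ge 0$) then places $\bar p$ in $W^{1,s}_0(\Omega)$ for every $1<s<2$. This duality with $C(\bar\Omega)$ is exactly what breaks down in higher dimensions and is the delicate point of the argument.
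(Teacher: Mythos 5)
Your proposal is correct and follows essentially the same route as the paper, which itself gives no proof but simply derives the theorem as a consequence of \cite[Theorem~5.2]{casas1993boundary}; your reconstruction (differentiability of $\mathcal{G}$, the abstract KKT theorem under the linearized Slater condition, the transposition definition of $\bar p$, and the duality identity collapsing the stationarity condition to \eqref{oc:VI control}) is exactly the standard argument behind that citation. The regularity step $\bar p \in W^{1,s}_0(\Omega)$ for all $1<s<2$ via the embedding $W^{1,s'}_0(\Omega)\hookrightarrow C(\bar\Omega)$, $s'>2$, is likewise the classical Stampacchia--Casas duality argument for equations with measure data in two dimensions, so no genuine gap remains.
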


\noindent
Note that  in view of  \eqref{oc:VI control} $\bar{u}$ is the $L^2$--projection of $-\frac{1}{\alpha} \bar p$ onto $U_{ad}$ so that
\begin{align*}
\bar u(x) = \min \big(\max \big (u_a,-\tfrac{1}{\alpha}\bar p(x) \big),u_b \big) \quad \forall \, x \in \Omega,
\end{align*}
Since $\bar p \in  W^{1,s}(\Omega)$ for  $1<s<2$  it follows from \cite[Corollary~A.6]{kinderlehrer1980introduction} that $\bar u \in W^{1,s}(\Omega)$ for  $1<s<2$ as well. Furthermore, 
it is well known that the multiplier $\bar \mu$ associated with the pointwise state constraints is concentrated at the points in $K$ where the state constraints are active. We state this more precisely in the next proposition whose  proof can be found in \cite{casas2014new}. Compare also the proof in \cite{casas1986control} when the bounds $y_a$, $y_b$ are constant functions.
\begin{proposition}
\label{Thm:supp measure}
Let $\bar \mu \in \mathcal{M}(K)$ and $\bar y \in C_0(\Omega)$ satisfy \eqref{oc:VI measure}. Then there holds
\begin{align*}
&\mbox{supp} (\bar \mu_b) \subset \{x \in K: \bar y(x)=y_b(x) \},\\
&\mbox{supp} (\bar \mu_a) \subset \{x \in K: \bar y(x)=y_a(x) \},
\end{align*}
where $\bar \mu = \bar \mu_b - \bar \mu_a$  with  $\bar \mu_b,\bar \mu_a \geq 0$ is the Jordan decomposition of $\bar \mu$. 
\end{proposition}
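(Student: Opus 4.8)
The plan is to reduce the two support inclusions to the two complementarity identities
\[
\int_K (y_b - \bar y)\, d\bar\mu_b = 0, \qquad \int_K (\bar y - y_a)\, d\bar\mu_a = 0,
\]
and then to extract these from the variational inequality \eqref{oc:VI measure} by a suitable choice of test functions. Throughout I use that $\bar y$ is feasible, i.e.\ $y_a \le \bar y \le y_b$ on $K$, which holds since $\bar y_{|K} \in Y_{ad}$ by \eqref{oc:state}; together with $\bar\mu_b,\bar\mu_a \ge 0$ this makes both integrands nonnegative, so each identity is equivalent to $\bar y = y_b$ $\bar\mu_b$--a.e., respectively $\bar y = y_a$ $\bar\mu_a$--a.e. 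The passage from these a.e.\ statements to the support inclusions is then immediate: since $\bar y,y_b$ are continuous, the set $A_b := \{x \in K : \bar y(x) = y_b(x)\}$ is closed and $\bar\mu_b(K \setminus A_b) = 0$, so every point outside $A_b$ has an open neighborhood of zero $\bar\mu_b$--measure and hence lies outside $\operatorname{supp}(\bar\mu_b)$; thus $\operatorname{supp}(\bar\mu_b) \subset A_b$, and the inclusion for $\bar\mu_a$ follows the same way.

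To prove the first identity I would exploit the mutual singularity of the Jordan parts. Fix a Borel partition $K = P \cup N$ with $\bar\mu_b(N) = 0$ and $\bar\mu_a(P) = 0$. For $\varepsilon > 0$, inner regularity of the finite Radon measures $\bar\mu_b,\bar\mu_a$ on the compact metric space $K$ yields compact sets $K_b \subset P$ and $K_a \subset N$ with $\bar\mu_b(K \setminus K_b) < \varepsilon$ and $\bar\mu_a(K \setminus K_a) < \varepsilon$; note $K_a \cap K_b = \emptyset$. By Urysohn's lemma choose $\theta \in C(K)$ with $0 \le \theta \le 1$, $\theta \equiv 1$ on $K_b$ and $\theta \equiv 0$ on $K_a$, and take as test function $z := (1-\theta)\bar y + \theta\, y_b \in C(K)$. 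Since $z$ is a pointwise convex combination of the two feasible functions $\bar y$ and $y_b$, we have $z \in Y_{ad}$ and $z - \bar y = \theta(y_b - \bar y)$.

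Inserting $z$ into \eqref{oc:VI measure} gives $\int_K \theta(y_b - \bar y)\, d\bar\mu_b \le \int_K \theta(y_b - \bar y)\, d\bar\mu_a$. On the left I bound below using $\theta \equiv 1$ on $K_b$ and $\bar\mu_b(K\setminus K_b) < \varepsilon$; on the right I bound above using $\theta \equiv 0$ on $K_a$ together with $0 \le \theta \le 1$ and $\bar\mu_a(K\setminus K_a) < \varepsilon$. With $C := \|y_b - \bar y\|_{C(K)}$ this yields $\int_K (y_b - \bar y)\, d\bar\mu_b \le 2C\varepsilon$, and letting $\varepsilon \to 0$, combined with the nonnegativity already noted, gives the first identity. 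The second identity is obtained symmetrically, testing with $z := (1-\theta')\bar y + \theta'\, y_a \in Y_{ad}$, where $\theta' \equiv 1$ on $K_a$ and $\theta' \equiv 0$ on $K_b$.

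The one genuine obstacle is that mutual singularity of $\bar\mu_b$ and $\bar\mu_a$ does \emph{not} make their supports disjoint, so one cannot simply test with a continuous function equal to $y_b$ on $\operatorname{supp}(\bar\mu_b)$ and to $\bar y$ on $\operatorname{supp}(\bar\mu_a)$. The inner-regularity-plus-Urysohn construction above, followed by the limit $\varepsilon \to 0$, is precisely what circumvents this; the resulting support inclusions are in any case consistent with $y_a < y_b$ on $K$, which forces the two supports to be disjoint. Everything else is routine estimation.
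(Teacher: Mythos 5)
Your proof is correct and follows essentially the standard complementarity argument that the paper itself does not spell out but delegates to \cite{casas2014new} and \cite{casas1986control}: Hahn decomposition plus inner regularity plus a Urysohn cut-off to obtain $\int_K (y_b-\bar y)\,d\bar\mu_b = \int_K (\bar y - y_a)\,d\bar\mu_a = 0$, and then the passage from these identities to the support inclusions via closedness of the active sets. You are also right to make explicit that the feasibility $y_a \le \bar y \le y_b$ on $K$ — left implicit in the proposition's wording but supplied by \eqref{oc:state} — is genuinely needed, since \eqref{oc:VI measure} alone does not imply the conclusion.
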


We note that the problem $(\mathbb{P})$ is in general nonconvex since the state equation is not linear. In other words, the problem $(\mathbb{P})$ can have several solutions. A decision of which of these solution is a global minimum proves difficult in general. However, it is shown in \cite{ali2016global} that if the nonlinearity $\phi$ of the state equation enjoys certain growth conditions, namely \eqref{assumption:D2phi}, then one can establish a condition that helps to decide if a given point satisfying the first order conditions is a global minimum. 
We state this condition of global optimality in the next result, but before that we first need to introduce the following constant:
\begin{align}
\label{eta def}
\displaystyle
\eta(\alpha,r):= \alpha^{\frac{\rho}{2}}  C^{\frac{2-2r}{r}}_q  M^{-1} \bigg(\frac{r-1}{2r-1} \bigg)^{\frac{1-r}{r}} 
q^{1/q} r^{1/r} \rho^{\rho/2} (2 - \rho)^{\frac{\rho}{2}-1}.
\end{align}  
Here, $q:=\tfrac{3r-2}{r-1}, \, \rho:=\frac{r+q}{rq}$, while $M$ and $r$ appear in  \eqref{assumption:D2phi}.  Furthermore, $C_q$ is  an upper bound on the 
optimal constant in the Gagliardo-Nirenberg inequality 
\[
\Vert f \Vert_{L^q} \leq C \Vert f \Vert_{L^2}^{\frac{2}{q}} \Vert \nabla f \Vert_{L^2}^{\frac{q-2}{q}} \qquad \forall f \in H^1(\mathbb{R}^2).
\]
For sharp upper bounds for the constant $C$, see for instance \cite[Theorem 7.3]{ali2016global}.
\begin{theorem}
\label{Thm:global minima}
Suppose that $\bar u \in U_{ad}$, $\bar y \in H^2(\Omega) \cap H^1_0(\Omega)$, $\bar p \in W^{1,s}_0(\Omega)$ ($1<s<2$), $\bar \mu \in \mathcal M(K)$ is a solution of 
\eqref{oc:state}--\eqref{oc:VI measure}.
If
\begin{align}
\label{18strich}
\|\bar p \|_{L^q(\Omega)} \leq \eta(\alpha,r), 
\end{align}
then $\bar u$ is a global minimum for Problem~$(\mathbb{P})$. If the inequality (\ref{18strich}) is strict, then
$\bar u$ is the unique global minimum. 
\end{theorem}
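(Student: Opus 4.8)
The plan is to show directly that $J(u)\ge J(\bar u)$ for every admissible $u\in U_{ad}$ with $\mathcal G(u)|_K\in Y_{ad}$, which is exactly the assertion that $\bar u$ is a global minimum. I would write $v:=u-\bar u$, $y:=\mathcal G(u)$, $w:=y-\bar y$, noting that $w\in H^1_0(\Omega)\cap H^2(\Omega)$ and, by subtracting the two copies of the state equation \eqref{WPDE:semilinear state}, that $-\Delta w=v-[\phi(y)-\phi(\bar y)]$. Expanding the two quadratic terms of $J$ gives
\[
J(u)-J(\bar u)=\int_\Omega(\bar y-y_0)w\,dx+\tfrac12\|w\|_{L^2(\Omega)}^2+\alpha\int_\Omega\bar u\,v\,dx+\tfrac\alpha2\|v\|_{L^2(\Omega)}^2 .
\]
Testing the adjoint equation \eqref{oc:adjoint} with $w$ and inserting the expression for $-\Delta w$ converts $\int_\Omega(\bar y-y_0)w\,dx$ into $\int_\Omega\bar p\,v\,dx-\int_\Omega\bar p\,R\,dx-\int_K w\,d\bar\mu$, where $R:=\phi(y)-\phi(\bar y)-\phi'(\bar y)w=\int_0^1(1-t)\phi''(\bar y+tw)\,w^2\,dt$ is the second order Taylor remainder. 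Regrouping,
\[
J(u)-J(\bar u)=\int_\Omega(\bar p+\alpha\bar u)v\,dx-\int_K(y-\bar y)\,d\bar\mu+\tfrac12\|w\|_{L^2(\Omega)}^2+\tfrac\alpha2\|v\|_{L^2(\Omega)}^2-\int_\Omega\bar p\,R\,dx .
\]
The first term is $\ge0$ by the variational inequality \eqref{oc:VI control}, and the second is $\ge0$ by \eqref{oc:VI measure} applied with $z=y|_K\in Y_{ad}$.

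Everything therefore reduces to the single inequality $\int_\Omega\bar p\,R\,dx\le\tfrac12\|w\|_{L^2(\Omega)}^2+\tfrac\alpha2\|v\|_{L^2(\Omega)}^2$. To bound the left-hand side I would use the growth condition \eqref{assumption:D2phi} to get $|R|\le M\,w^2\int_0^1(1-t)\phi'(\bar y+tw)^{1/r}\,dt$. A Hölder estimate in $t$ against the weight $(1-t)$ produces a factor matching the term $(\tfrac{r-1}{2r-1})^{(1-r)/r}$ in \eqref{eta def} and leaves $\Phi^{1/r}$, where $\Phi:=\int_0^1\phi'(\bar y+tw)\,dt\ge0$. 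Testing $-\Delta w=v-[\phi(y)-\phi(\bar y)]$ with $w$ and using monotonicity of $\phi$ gives the energy identity $\|\nabla w\|_{L^2}^2+\int_\Omega\Phi\,w^2\,dx=\int_\Omega v\,w\,dx$, so both $\int_\Omega\Phi w^2\,dx$ and $\|\nabla w\|_{L^2}^2$ are controlled by $\|v\|_{L^2}\|w\|_{L^2}$.

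The heart of the argument is a three-factor Hölder inequality for $\int_\Omega|\bar p|\,\Phi^{1/r}w^2\,dx$, splitting the integrand as $|\bar p|\cdot(\Phi w^2)^{1/r}\cdot w^{2(r-1)/r}$ with exponents $q$, $r$, $c$ satisfying $\tfrac1q+\tfrac1r+\tfrac1c=1$. With $q=\tfrac{3r-2}{r-1}$ the last factor lands exactly in $L^q$, so it is handled by the Gagliardo--Nirenberg inequality, producing the constant $C_q$ together with powers of $\|w\|_{L^2}$ and $\|\nabla w\|_{L^2}$. Feeding in the energy identity, the bound collapses to $M\,C_q^{2(r-1)/r}\,\|\bar p\|_{L^q}\,a^{\rho}b^{2-\rho}$ with $a:=\|v\|_{L^2}$, $b:=\|w\|_{L^2}$; a short computation shows the $a$-exponent equals $\rho=\tfrac1q+\tfrac1r$ precisely because of the choice of $q$, and the two exponents sum to $2$. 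Minimising $(\tfrac\alpha2 a^2+\tfrac12 b^2)/(a^{\rho}b^{2-\rho})$ by Young's inequality yields the sharp threshold, which assembles into exactly $\eta(\alpha,r)$; hence $\|\bar p\|_{L^q}\le\eta(\alpha,r)$ forces the right-hand side above to be nonnegative, so $\bar u$ is a global minimum.

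For uniqueness I would note that when $\|\bar p\|_{L^q}<\eta(\alpha,r)$ the final comparison is strict whenever $(v,w)\ne(0,0)$; since $v=0$ implies $w=0$, any competitor $u\ne\bar u$ has $v\ne0$ and therefore $J(u)-J(\bar u)\ge\tfrac\alpha2\|v\|_{L^2}^2-\int_\Omega\bar p R\,dx>0$, so $\bar u$ is the unique global minimum. I expect the main obstacle to be the last two steps: choosing the Hölder exponents so that the $\phi'$-weight is absorbed exactly by the available energy identity, and carrying out the Young optimisation so that the constants $C_q$, $M$, the $t$-integral factor, and the powers of $q$, $r$, $\rho$ combine into precisely the closed form $\eta(\alpha,r)$ of \eqref{eta def} rather than a merely qualitative threshold.
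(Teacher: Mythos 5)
Your proposal is correct and follows essentially the same route as the proof the paper relies on (given in \cite{ali2016global} and partially retraced in the estimate of $S_{4.1}$ in Section~\ref{sec:error}): expand $J(u)-J(\bar u)$, cancel the linear terms via \eqref{oc:VI control} and \eqref{oc:VI measure}, and bound the weighted Taylor remainder $\int_\Omega \bar p\,R\,dx$ by $\eta(\alpha,r)^{-1}\|\bar p\|_{L^q(\Omega)}\bigl(\tfrac12\|w\|_{L^2(\Omega)}^2+\tfrac\alpha2\|v\|_{L^2(\Omega)}^2\bigr)$ using \eqref{assumption:D2phi}, the energy identity, the three-exponent H\"older inequality, the Gagliardo--Nirenberg inequality and Young's inequality, which is exactly how the constants $M$, $\bigl(\tfrac{r-1}{2r-1}\bigr)^{(r-1)/r}$, $C_q^{(2r-2)/r}$ and the powers of $q$, $r$, $\rho$ in \eqref{eta def} arise. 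The uniqueness argument under strict inequality is also the standard one.
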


\subsection{Variational Discretization}
\label{sec:variational}

Let $\mathcal T_h$ be an admissible triangulation of the polygonal domain $\Omega \subset \mathbb{R}^2$ with 
\begin{displaymath}
\overline{\Omega} = \bigcup_{T \in \mathcal T_h} \overline{T}.
\end{displaymath}
Here $h:=\max_{T \in \mathcal{T}_h} \mbox{diam}(T)$ is the maximum mesh size, while $\mbox{diam}(T)$ stands for the diameter of the triangle $T$. We
introduce the following spaces of linear finite elements:
\begin{align*}
X_{h} &:= \{ v_h \in C(\bar\Omega) : v_h|_T \mbox{ is a linear polynomial on each }  T \in \mathcal{T}_h\}, \\
X_{h0} &:= \{ v_h \in X_h : {v_h}_{ | \partial\Omega}=0   \}.
\end{align*}
The Lagrange interpolation operator $I_h$ is defined by
\begin{align*}
I_h:C(\bar\Omega) \to X_h, \qquad I_h y:= \sum_{i=1}^n y(x_i) \phi_i,
\end{align*}
where $\{x_1,\ldots,x_n\}$ denote the nodes in the triangulation  $\mathcal{T}_h$ and $\{\phi_1, \ldots, \phi_n\}$  are the basis functions of the space $X_h$ which satisfy $\phi_i(x_j)=\delta_{ij}$.

The finite element discretization of  \eqref{WPDE:semilinear state} reads: for a given $u \in L^2(\Omega)$, find $y_h \in X_{h0}$ such that 
\begin{align}
\label{WPDE:semilinear state h}
\int_\Omega \nabla y_h   \cdot  \nabla v_h +  \phi(y_h) v_h \, dx = \int_\Omega u v_h \,dx \quad \forall \, v_h \in X_{h0}.
\end{align}
Using the monotonicity of $\phi$ and the Brouwer fixed-point theorem one can show that \eqref{WPDE:semilinear state h} admits a unique solution  $y_h \in X_{h0}$. Hence, analogously to \eqref{solution operator}, we introduce
the discrete control--to--state operator
\begin{align}
\label{solution operator h}
\mathcal{G}_h:L^2(\Omega) \to X_{h0}
\end{align}
such that $y_h:=\mathcal{G}_h(u)$ is the solution of \eqref{WPDE:semilinear state h}.

The variational discretization (see \cite{hinze2005variational}) of Problem~$(\mathbb{P})$ reads:
\[
(\mathbb{P}_h) \quad
\begin{array}{l}
 \min_{u \in U_{ad}} J_h(u):=\frac{1}{2} \Vert y_h-y_0 \Vert_{L^2(\Omega)}^2  + \dfrac{\alpha}{2} \Vert u \Vert_{L^2(\Omega)} ^2 \\
\mbox{ subject to } y_h = \mathcal G_h(u), \,  (y_h(x_j))_{x_j \in \mathcal N_h} \in Y^h_{ad}, 
\end{array}
\]
where we define
\[
Y^h_{ad}:=\{ (z_j)_{x_j \in \mathcal N_h} \, | \, y_a(x_j) \leq z_j \leq y_b(x_j), x_j \in \mathcal N_h \},
\]
with the set of nodes
\begin{align*}
\mathcal N_h := \lbrace x_j \, | \, x_j \mbox{ is a vertex of } T \in \mathcal T_h, \mbox{ where } T \cap K \neq \emptyset \rbrace.
\end{align*}
We remark that $y_a(x_j) < y_b(x_j), x_j \in \mathcal N_h$ provided that $h$ is small enough. This follows from the fact that $\mbox{dist}(x_j,K) \leq h, x_j \in \mathcal N_h$  and $y_a,y_b$ are continuous functions  with  $y_a(x) <y_b(x), x \in K$.

In an analogous way to that of problem $(\mathbb{P})$, one can show that $(\mathbb{P}_h)$ admits at least one solution, denoted by $\bar u_h$, provided that a feasible point exists. In practice one
calculates candidates for solutions of $(\mathbb{P}_h)$ by solving the system of necessary first order conditions which reads: find $\bar u_h \in U_{ad}, \bar y_h \in X_{h0},
 \bar{p}_h \in X_{h0}$ and
$\bar{\mu}_j \in \mathbb{R}, x_j \in \mathcal N_h$ such that 
\begin{align}
&\int_{\Omega} \nabla \bar y_h \cdot \nabla v_h + \phi(\bar y_h) v_h \, dx  = \int_{\Omega} \bar u_h v_h \, dx  \quad \forall \, v_h \in X_{h0},  \qquad   (\bar y_h(x_j))_{x_j \in \mathcal N_h} \in Y^h_{ad}  \label{oc:state h} \\
&\int_\Omega \nabla \bar p_h \cdot \nabla v_h + \phi'(\bar y_h)\bar p_h v_h \, dx 
    \nonumber \\
& \enspace =\int_\Omega (\bar y_h -y_0) v_h \, dx  + \sum_{x_j \in \mathcal N_h} \bar \mu_j v_h(x_j)  \qquad \forall \, v_h \in X_{h0}, \label{oc:adjoint h}
 \\
& \int_\Omega (\bar p_h + \alpha \bar u_h)(u-\bar u_h) \, dx  \geq  0 \qquad \forall \, u \in U_{ad}, \label{oc:VI control h} \\
& \sum_{x_j \in \mathcal N_h} \bar \mu_j (z_j-\bar y_h(x_j))  \leq  0 \qquad \forall \, (z_j)_{x_j \in \mathcal N_h} \in Y^h_{ad}. \label{oc:VI measure h}
\end{align}
As in the continuous case, there exist multipliers $\bar p_h$ and $\bar{\mu}_j \in \mathbb{R}, x_j \in \mathcal N_h$ solving \eqref{oc:state h}--\eqref{oc:VI measure h}
provided that the local solution  $\bar u_h$ satisfies the linearized Slater condition, that is, there exist $u_0 \in U_{ad}$ and $\delta >0$ such that
\begin{align}
y_a(x_j)+\delta \leq \mathcal{G}_h(\bar u_h)(x_j)+ \mathcal{G}'_h(\bar u_h)(u_0 - \bar u_h)(x_j) \leq y_b(x_j)-\delta , \, x_j \in \mathcal N_h.  \label{slater condition h}
\end{align}
It will be convenient in the upcoming analysis to associate with the multipliers $(\bar{\mu}_j)_{x_j \in \mathcal N_h}$ from the system \eqref{oc:state h}--\eqref{oc:VI measure h}   the measure $\bar \mu_h \in \mathcal M(\Omega)$ defined by
\begin{align}
\label{discrete measure}
\bar \mu_h:= \sum_{x_j \in \mathcal N_h} \bar\mu_j \delta_{x_j},
\end{align}
where $\delta_{x_j}$ is the Dirac measure at $x_j$. We can easily deduce from  \eqref{oc:VI measure h} the following result about the support of the measure $\bar\mu_h$.

\begin{proposition}
\label{Thm:supp measure h}
Let $\bar \mu_h \in \mathcal{M}(\Omega)$ be the measure introduced in \eqref{discrete measure} satisfying \eqref{oc:VI measure h}. Then there holds
\begin{align*}
&\mbox{supp} (\bar \mu^b_h) \subset \{x_j \in \mathcal{N}_h: \bar y_h(x_j)=y_b(x_j) \},\\
&\mbox{supp} (\bar \mu^a_h) \subset \{x_j \in \mathcal{N}_h: \bar y_h(x_j)=y_a(x_j) \}.
\end{align*}
where $\bar \mu_h = \bar \mu^b_h - \bar \mu^a_h$  with  $\bar \mu^b_h,\bar \mu^a_h \geq 0$ is the Jordan decomposition of $\bar \mu_h$.  
\end{proposition}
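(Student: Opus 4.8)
The plan is to reduce the statement to a nodewise complementarity relation and then read it off from \eqref{oc:VI measure h}. Since $\bar\mu_h = \sum_{x_j \in \mathcal N_h} \bar\mu_j \delta_{x_j}$ is a finite combination of Dirac measures sitting at distinct points, its Jordan decomposition is immediate: I would set $\bar\mu^b_h := \sum_{\bar\mu_j>0} \bar\mu_j \delta_{x_j}$ and $\bar\mu^a_h := \sum_{\bar\mu_j<0} (-\bar\mu_j)\, \delta_{x_j}$, which are manifestly nonnegative and satisfy $\bar\mu_h = \bar\mu^b_h - \bar\mu^a_h$. Consequently $\mbox{supp}(\bar\mu^b_h) = \{x_j \in \mathcal N_h : \bar\mu_j > 0\}$ and $\mbox{supp}(\bar\mu^a_h) = \{x_j \in \mathcal N_h : \bar\mu_j < 0\}$, so it suffices to show that $\bar\mu_j > 0$ forces $\bar y_h(x_j) = y_b(x_j)$, and $\bar\mu_j < 0$ forces $\bar y_h(x_j) = y_a(x_j)$.

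First I would record the feasibility of the discrete state, $y_a(x_j) \le \bar y_h(x_j) \le y_b(x_j)$ for every $x_j \in \mathcal N_h$, which is precisely the membership $(\bar y_h(x_j))_{x_j \in \mathcal N_h} \in Y^h_{ad}$ built into \eqref{oc:state h}. The central device is then to test \eqref{oc:VI measure h} with single-node perturbations. Fixing an index $k$, I would take $(z_j)_{x_j \in \mathcal N_h}$ with $z_j := \bar y_h(x_j)$ for all $j \ne k$ and $z_k$ an arbitrary value in $[y_a(x_k), y_b(x_k)]$; this tuple lies in $Y^h_{ad}$ exactly because of the feasibility just recorded. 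All terms in the sum \eqref{oc:VI measure h} except the $k$-th then cancel, leaving $\bar\mu_k\,(z_k - \bar y_h(x_k)) \le 0$ for every $z_k \in [y_a(x_k), y_b(x_k)]$.

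It then remains to split into the two sign cases. If $\bar\mu_k > 0$, the inequality forces $z_k \le \bar y_h(x_k)$ for all admissible $z_k$; choosing $z_k = y_b(x_k)$ gives $y_b(x_k) \le \bar y_h(x_k)$, which together with feasibility yields $\bar y_h(x_k) = y_b(x_k)$. Symmetrically, if $\bar\mu_k < 0$, one obtains $z_k \ge \bar y_h(x_k)$ for all admissible $z_k$, and the choice $z_k = y_a(x_k)$ combined with feasibility gives $\bar y_h(x_k) = y_a(x_k)$. This delivers both support inclusions. There is no genuine obstacle here: the argument is the discrete analogue of the complementarity reasoning behind Proposition~\ref{Thm:supp measure}, and the only points needing a little care are the explicit identification of the Jordan decomposition for the atomic measure $\bar\mu_h$ and the check that the single-node perturbations remain in $Y^h_{ad}$.
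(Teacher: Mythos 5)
Your proof is correct and fills in exactly the argument the paper has in mind: the paper offers no written proof, merely noting that the result is ``easily deduced from \eqref{oc:VI measure h}'', and your single-node perturbation of the variational inequality combined with the explicit Jordan decomposition of the atomic measure is precisely that deduction. All steps check out, including the observation that the perturbed tuple stays in $Y^h_{ad}$ by feasibility of $(\bar y_h(x_j))_{x_j \in \mathcal N_h}$.
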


Analogously to Theorem~\ref{Thm:global minima}, we have the next theorem about global solutions of problem $(\mathbb{P}_h)$. The proof can be found in \cite{ali2016global}.
\begin{theorem}
\label{Thm:global minima h}
Suppose that $\bar u_h \in U_{ad}$, $\bar y_h \in X_{h0}$, $\bar p_h \in X_{h0}$, $(\bar \mu_j)_{x_j \in \mathcal N_h}$ is a solution of \eqref{oc:state h}--\eqref{oc:VI measure h}.
If
\begin{align}
\label{18strich h}
\|\bar p_h\|_{L^q(\Omega)} \leq \eta(\alpha,r), 
\end{align}
then $\bar u_h$ is a global minimum for Problem~$(\mathbb{P}_h)$. If the inequality \eqref{18strich h} is strict, then $\bar u_h$ is the unique global minimum. 
\end{theorem}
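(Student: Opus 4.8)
The plan is to show directly that the excess $J_h(u)-J_h(\bar u_h)$ is nonnegative for every feasible control $u$, i.e. every $u\in U_{ad}$ with $(\mathcal G_h(u)(x_j))_{x_j\in\mathcal N_h}\in Y^h_{ad}$. Writing $y_u:=\mathcal G_h(u)$ and $e_h:=y_u-\bar y_h\in X_{h0}$, I first expand the two quadratic contributions of $J_h$ about $(\bar u_h,\bar y_h)$, which produces the cross terms $\int_\Omega(\bar y_h-y_0)e_h\,dx$ and $\alpha\int_\Omega\bar u_h(u-\bar u_h)\,dx$ together with the squares $\tfrac12\|e_h\|_{L^2}^2+\tfrac\alpha2\|u-\bar u_h\|_{L^2}^2$. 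The linear terms are then rewritten using the first order system: testing the discrete adjoint equation \eqref{oc:adjoint h} with $e_h$, testing the difference of the two state equations \eqref{oc:state h} (for $u$ and for $\bar u_h$) with $\bar p_h$, and inserting the integral Taylor identity $\phi(y_u)-\phi(\bar y_h)=\phi'(\bar y_h)e_h+R\,e_h^2$, where $R:=\int_0^1(1-s)\phi''(\bar y_h+s e_h)\,ds$. This yields $\int_\Omega(\bar y_h-y_0)e_h\,dx=\int_\Omega\bar p_h(u-\bar u_h)\,dx-\int_\Omega R\,\bar p_h e_h^2\,dx-\sum_j\bar\mu_j e_h(x_j)$. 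The control part combines into $\int_\Omega(\bar p_h+\alpha\bar u_h)(u-\bar u_h)\,dx\ge0$ by \eqref{oc:VI control h}, while $-\sum_j\bar\mu_j e_h(x_j)=-\sum_j\bar\mu_j\big(y_u(x_j)-\bar y_h(x_j)\big)\ge0$ by the complementarity \eqref{oc:VI measure h} applied with the admissible choice $z_j=y_u(x_j)$. Hence only one indefinite term survives:
\[
J_h(u)-J_h(\bar u_h)\ \ge\ \tfrac12\|e_h\|_{L^2}^2+\tfrac\alpha2\|u-\bar u_h\|_{L^2}^2-\int_\Omega R\,\bar p_h\,e_h^2\,dx,
\]
so everything reduces to dominating the last integral by the two squares.

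For this estimate I would exploit the growth condition \eqref{assumption:D2phi}. Setting $\psi:=\int_0^1\phi'(\bar y_h+s e_h)\,ds\ge0$, Hölder's inequality in $s$ gives the pointwise bound $|R|\le M\big(\tfrac{r-1}{2r-1}\big)^{(r-1)/r}\psi^{1/r}$. Next, Hölder's inequality in $x$ with the three exponents $q$, $r$ and $q'$ (where $\tfrac1q+\tfrac1r+\tfrac1{q'}=1$, so that the third factor becomes exactly a power of $\|e_h\|_{L^q}$) isolates the control multiplier: $\int_\Omega\psi^{1/r}|\bar p_h|e_h^2\,dx\le\|\bar p_h\|_{L^q}\big(\int_\Omega\psi e_h^2\,dx\big)^{1/r}\|e_h\|_{L^q}^{2(r-1)/r}$. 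Since $e_h\in X_{h0}$, its extension by zero lies in $H^1(\mathbb R^2)$, so the Gagliardo--Nirenberg inequality applies and converts $\|e_h\|_{L^q}$ into $\|e_h\|_{L^2}$ and $\|\nabla e_h\|_{L^2}$ with constant $C_q$. Finally, testing the discrete state difference with $e_h$ yields the identity $\|\nabla e_h\|_{L^2}^2+\int_\Omega\psi e_h^2\,dx=\int_\Omega(u-\bar u_h)e_h\,dx\le\|u-\bar u_h\|_{L^2}\|e_h\|_{L^2}$. Because $\int_\Omega\psi e_h^2\,dx$ and $\|\nabla e_h\|_{L^2}^2$ are the two nonnegative summands of this identity, maximizing their product of powers under the constraint collapses the whole bound to the form $\text{const}\cdot\|\bar p_h\|_{L^q}\,\|e_h\|_{L^2}^{2-\rho}\|u-\bar u_h\|_{L^2}^{\rho}$, where the definitions $q=\tfrac{3r-2}{r-1}$ and $\rho=\tfrac1r+\tfrac1q$ are exactly what force the two exponents to sum to $2$.

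With the indefinite term controlled by such a product, I would conclude by a weighted Young inequality: since the exponents sum to $2$, both sides are homogeneous of degree two, and balancing $\|e_h\|_{L^2}^{2-\rho}\|u-\bar u_h\|_{L^2}^{\rho}$ against the target $\tfrac12\|e_h\|_{L^2}^2+\tfrac\alpha2\|u-\bar u_h\|_{L^2}^2$ with the optimal weight shows that the indefinite term is absorbed precisely when $\|\bar p_h\|_{L^q}\le\eta(\alpha,r)$, with $\eta$ as in \eqref{eta def} after collecting $M$, $C_q$, the factor $\big(\tfrac{r-1}{2r-1}\big)^{(r-1)/r}$, and the remaining Young/AM--GM constants in $q$ and $\rho$. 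This gives $J_h(u)-J_h(\bar u_h)\ge0$, so $\bar u_h$ is a global minimum. If \eqref{18strich h} is strict, the same chain produces $J_h(u)-J_h(\bar u_h)\ge(1-\theta)\big(\tfrac12\|e_h\|_{L^2}^2+\tfrac\alpha2\|u-\bar u_h\|_{L^2}^2\big)$ with $\theta<1$, which vanishes only if $u=\bar u_h$; hence the global minimizer is unique.

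The main obstacle is the sharp bookkeeping of constants rather than any conceptual difficulty: the Hölder--Gagliardo--Nirenberg--Young chain must be assembled so that it is homogeneous of degree two in $(\|e_h\|_{L^2},\|u-\bar u_h\|_{L^2})$ — which is what pins down $q$ and $\rho$ — and so that the product of all multiplicative factors, after the \emph{optimal} split in Young's inequality, reproduces $\eta(\alpha,r)$ exactly and not merely up to a non-sharp multiple. Everything else parallels the continuous argument behind Theorem~\ref{Thm:global minima}, with $X_{h0}$ replacing $H_0^1(\Omega)\cap H^2(\Omega)$ as the test space and the discrete relations \eqref{oc:state h}--\eqref{oc:VI measure h} replacing \eqref{oc:state}--\eqref{oc:VI measure}; the only genuinely discrete points are that the zero extension of $e_h\in X_{h0}$ belongs to $H^1(\mathbb R^2)$, legitimizing the Gagliardo--Nirenberg step, and that the state-constraint complementarity is a finite sum over $\mathcal N_h$ rather than an integral against a measure.
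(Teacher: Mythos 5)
Your proposal is correct and follows essentially the same route as the proof the paper defers to in \cite{ali2016global}: the expansion of $J_h(u)-J_h(\bar u_h)$ via the discrete first order system reduces everything to the remainder term $\int_\Omega R\,\bar p_h e_h^2\,dx$, which is precisely the quantity $R_h(u_h)$ whose bound the paper itself retraces in \eqref{side calculation: 3}, and your H\"older--Gagliardo--Nirenberg--Young chain with exponents $q=\tfrac{3r-2}{r-1}$ and $\rho=\tfrac1q+\tfrac1r$ reproduces exactly the constant $\eta(\alpha,r)^{-1}$ there. The only part you leave implicit is the final bookkeeping of the Young/AM--GM constants, but the structure you set up is the right one and does yield $\eta(\alpha,r)$ sharply.
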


\section{Error Analysis}
\label{sec:error}
Let $\{\mathcal{T}_h \}_{0<h \leq h_0}$ be a sequence of admissible triangulations of $\Omega$. We assume 
that the  sequence  $\{\mathcal{T}_h \}_{0<h \leq h_0}$ is quasi-uniform in the sense that each $T \in \mathcal{T}_h$ is contained in a ball of radius $\gamma^{-1}h$ and contains a ball of radius 
$\gamma h$ for some $\gamma >0$ independent of $h$. In addition we make the following assumption concerning the set $K$:

\begin{assumption}
\label{assumption:3}
For every $h>0$ there exists a set of triangles $ \mathbb{T}_h \subset \mathcal{T}_h$ such that
\[
K= \bigcup_{T \in \mathbb{T}_h} \bar T. 
\]
\end{assumption}

In what follows we consider a sequence $(\bar u_h, \bar y_h, \bar p_h, \bar \mu_h)_{0<h \leq h_1}$ of solutions of
\eqref{oc:state h}--\eqref{oc:VI measure h} satisfying
\begin{equation}
\label{inequality:strict ph}
\|\bar p_h\|_{L^q(\Omega)} \le \kappa \eta(\alpha,r) \qquad \mbox{ for } 0 < h \leq h_1
\end{equation}
for some $\kappa \in (0,1)$ that is independent of $h$. We immediately infer
from Theorem \ref{Thm:global minima h} that $\bar{u}_h$ is the unique  global minimum of $(\mathbb{P}_h)$ and we are interested
in the convergence properties of these solutions as $h \rightarrow 0$. It is shown in  \cite{ali2016global} (see Theorem 4.2 and
its proof) that there exist $\bar u \in U_{ad}$, $\bar p \in L^q(\Omega)$ and $\bar \mu \in \mathcal M(K)$ such that
\begin{displaymath}
\bar u_h \rightarrow \bar u \mbox{ in } L^2(\Omega), \quad  \bar p_h \rightharpoonup \bar p \mbox{ in } L^q(\Omega), \quad \bar \mu_h \rightharpoonup \bar \mu \mbox{ in }
\mathcal M(K)
\end{displaymath}
and  $(\bar u, \bar y = \mathcal G(\bar u), \bar p, \bar \mu)$ is a solution of \eqref{oc:state}--\eqref{oc:VI measure}. Since 
\begin{equation}
\Vert \bar p \Vert_{L^q(\Omega)} \leq \liminf_{h \rightarrow 0} \Vert \bar p_h \Vert_{L^q(\Omega)} \leq \kappa \eta(\alpha,r), \label{inequality:strict p}
\end{equation}
Theorem \ref{Thm:global minima}
implies that $\bar u$ is the unique global optimum of $(\mathbb{P})$. The aim in the remaining part of this paper is to 
prove error estimates for $\bar u_h - \bar u$ and the corresponding optimal states $\bar y_h - \bar y$. Our main results read: 

\begin{theorem}
\label{Thm:error estimate}
Suppose that \eqref{inequality:strict ph} holds  and let $\bar u_h, \, \bar{u}$ be the unique global minima of $(\mathbb{P}_h)$ and $(\mathbb{P})$
respectively. Then we have for any $1<s<2$ that
\begin{eqnarray}
\|\bar u_h -\bar u\|_{L^2(\Omega)} & \leq &   c_s \sqrt{|\ln h|} \, h^{\frac{3}{2}-\frac{1}{s}} \label{uestimate:optimal order} \\
\|\bar y_h-\bar y\|_{H^1(\Omega)} + \Vert \bar y_h - \bar y \Vert_{L^{\infty}(\Omega)} & \leq & c_s \sqrt{|\ln h|} \, h^{\frac{3}{2}-\frac{1}{s}}. \label{yestimate:optimal order}
\end{eqnarray}
\end{theorem}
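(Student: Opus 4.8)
The plan is to exploit the two variational inequalities \eqref{oc:VI control} and \eqref{oc:VI control h} together with the strong convexity of the control cost to reduce the control error to a single pairing between the adjoint error and the control error, and then to estimate that pairing through the state and adjoint equations, paying for the low regularity of the measure terms by a pointwise estimate for the state. First I would test \eqref{oc:VI control} with $u=\bar u_h$ and \eqref{oc:VI control h} with $u=\bar u$ and add the two inequalities. The cross term $\alpha\int_\Omega(\bar u-\bar u_h)(\bar u_h-\bar u)\,dx=-\alpha\|\bar u_h-\bar u\|_{L^2(\Omega)}^2$ supplies the coercive left-hand side, leaving
\begin{equation*}
\alpha\|\bar u_h-\bar u\|_{L^2(\Omega)}^2 \le \int_\Omega(\bar p-\bar p_h)(\bar u_h-\bar u)\,dx .
\end{equation*}
This is the central inequality; the remaining work is devoted to bounding its right-hand side by (essentially) the square of the target rate.

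To estimate $\int_\Omega(\bar p-\bar p_h)(\bar u_h-\bar u)\,dx$ I would insert the auxiliary discrete functions $\hat y_h:=\mathcal G_h(\bar u)\in X_{h0}$ and the finite element approximation $\hat p_h\in X_{h0}$ of the continuous adjoint equation \eqref{oc:adjoint} with data $\bar y$ and $\bar\mu$, and then reorganise the right-hand side using the four relations \eqref{oc:state}, \eqref{oc:state h}, \eqref{oc:adjoint}, \eqref{oc:adjoint h}: testing the state difference \eqref{oc:state}$-$\eqref{oc:state h} with discrete functions and pairing the adjoint relations against the state error. After the cancellations, the right-hand side splits into (i) Galerkin and consistency errors for the state and adjoint equations and (ii) a measure pairing of the form $\int_K(\bar y-\bar y_h)\,d\bar\mu-\sum_{x_j\in\mathcal N_h}\bar\mu_j\bigl(\bar y(x_j)-\bar y_h(x_j)\bigr)$. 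For (i) I would insert the finite element estimates: the elevated integrability $\bar y\in W^{2,p}(\Omega)$ with $p=\tfrac{2s}{2-s}$ (which accompanies $\bar u,\bar p\in W^{1,s}$) yields an $L^\infty$ state error of order $|\ln h|\,h^{3-\frac{2}{s}}$, while the adjoint equation with measure right-hand side, whose solution lies only in $W^{1,s}_0(\Omega)$, is controlled by the known estimates for elliptic problems with measure data, the logarithm entering through the two-dimensional pointwise bounds. The terms proportional to $\bar u_h-\bar u$ generated along the way have the form $c\,h^{\beta}\|\bar u_h-\bar u\|_{L^2(\Omega)}$ and are absorbed into the left-hand side by Young's inequality.

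The main obstacle is item (ii), the matching of the continuous measure $\bar\mu\in\mathcal M(K)$ with the nodal masses $\bar\mu_j$. Here I would invoke the complementarity structure: by Propositions \ref{Thm:supp measure} and \ref{Thm:supp measure h} the measures are concentrated where the constraints are active, so that $\bar y\in\{y_a,y_b\}$ on $\mathrm{supp}(\bar\mu)$ and analogously for $\bar\mu_h$. Combining this with the sign conditions \eqref{oc:VI measure}, \eqref{oc:VI measure h} (using $\bar y_h$ as a competitor in the continuous inequality and nodal interpolants of $y_a,y_b$ as competitors in the discrete one) bounds the measure pairing by $\bigl(\|\bar\mu\|_{\mathcal M(K)}+\|\bar\mu_h\|_{\mathcal M(\Omega)}\bigr)\,\|\bar y-\bar y_h\|_{L^\infty(\Omega)}$ plus interpolation errors of $y_a,y_b$. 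The uniform bound \eqref{inequality:strict ph} and the a priori convergence $\bar u_h\to\bar u$ from \cite{ali2016global} give $h$-independent bounds on $\|\bar u_h\|_{L^2(\Omega)}$, hence on the total variations of the measures and on all constants in the finite element estimates, so that this term is again of order $|\ln h|\,h^{3-\frac{2}{s}}$. Collecting all contributions yields
\begin{equation*}
\alpha\|\bar u_h-\bar u\|_{L^2(\Omega)}^2 \le \tfrac{\alpha}{2}\|\bar u_h-\bar u\|_{L^2(\Omega)}^2 + c_s\,|\ln h|\,h^{3-\frac{2}{s}} ,
\end{equation*}
and taking square roots gives \eqref{uestimate:optimal order}, the factor $\sqrt{|\ln h|}$ being precisely the square root of the logarithm in the pointwise state estimate.

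Finally, \eqref{yestimate:optimal order} follows from the splitting $\bar y-\bar y_h=\bigl(\mathcal G(\bar u)-\mathcal G_h(\bar u)\bigr)+\bigl(\mathcal G_h(\bar u)-\mathcal G_h(\bar u_h)\bigr)$. The first summand is the finite element error for the state at the fixed control $\bar u$, which is of order $h$ in $H^1(\Omega)$ and $|\ln h|\,h^{3-\frac{2}{s}}$ in $L^\infty(\Omega)$, both dominated by the right-hand side of \eqref{yestimate:optimal order}; the second summand is bounded by the Lipschitz stability of $\mathcal G_h$ in terms of $\|\bar u_h-\bar u\|_{L^2(\Omega)}$, which has just been estimated. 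Adding the two contributions completes the proof.
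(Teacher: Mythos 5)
Your opening step and overall skeleton (combining \eqref{oc:VI control} and \eqref{oc:VI control h}, inserting auxiliary discrete state/adjoint functions, exploiting complementarity for the measure pairing, and recovering the state estimates from the control estimate) coincide with the paper's proof. However, there is a genuine gap at the heart of your argument: your decomposition of $\int_\Omega(\bar p-\bar p_h)(\bar u_h-\bar u)\,dx$ into ``(i) Galerkin and consistency errors'' and ``(ii) a measure pairing'' omits the second--order Taylor remainder of the nonlinearity $\phi$. When the adjoint equations (with coefficients $\phi'(\bar y_h)$ and $\phi'(\bar y)$) are paired against the state error, the mismatch with the difference quotient of $\phi$ produces terms of the form $\int_\Omega[\phi(\tilde y_h)-\phi(\bar y_h)+\phi'(\bar y_h)(\bar y_h-\tilde y_h)]\,\bar p_h\,dx$ and its continuous counterpart. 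These are \emph{not} of the form $c\,h^{\beta}\|\bar u_h-\bar u\|_{L^2(\Omega)}$: using \eqref{assumption:D2phi} and the Gagliardo--Nirenberg inequality as in \cite{ali2016global} they are bounded by $\eta(\alpha,r)^{-1}\|\bar p_h\|_{L^q(\Omega)}\bigl(\tfrac12\|\bar y_h-\bar y\|^2_{L^2(\Omega)}+\tfrac{\alpha}{2}\|\bar u_h-\bar u\|^2_{L^2(\Omega)}\bigr)$ plus higher--order terms, i.e.\ they carry \emph{no} power of $h$ and are of exactly the same size as the coercive left--hand side. They can only be absorbed because \eqref{inequality:strict ph} holds with $\kappa<1$ \emph{and} because the tracking term contributes a negative quantity $-\|\bar y_h-\bar y\|^2_{L^2(\Omega)}$ that dominates the factor $\kappa\|\bar y_h-\bar y\|^2_{L^2(\Omega)}$. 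Your proposal uses \eqref{inequality:strict ph} only to obtain uniform a priori bounds, which is a fundamental misreading of where the strict global--optimality condition enters; without this absorption step the argument closes only for a linear state equation.

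A second, smaller problem concerns item (ii): you bound the measure pairing by $\bigl(\|\bar\mu\|_{\mathcal M(K)}+\|\bar\mu_h\|_{\mathcal M(\Omega)}\bigr)\|\bar y-\bar y_h\|_{L^\infty(\Omega)}$. This quantity is not an a priori finite element error --- it contains the unknown discretization error of the optimal state and, after converting it to $c\|\bar u-\bar u_h\|_{L^2(\Omega)}$ via Lipschitz stability, leaves a term that is \emph{linear} in the control error with an $O(1)$ coefficient, which cannot be absorbed into $\alpha\|\bar u-\bar u_h\|^2_{L^2(\Omega)}$ without destroying the rate. The complementarity conditions of Propositions \ref{Thm:supp measure} and \ref{Thm:supp measure h} must instead be used to eliminate $\bar y_h-\bar y$ from the pairing altogether, replacing it by $y_b$ (resp.\ $y_a$, resp.\ $I_hy_b$, $I_hy_a$) on the supports of the measures, so that what remains is $\|\tilde y_h-\bar y\|_{L^\infty(\Omega_0)}$ with $\tilde y_h=\mathcal G_h(\bar u)$ the discrete state at the \emph{exact} control --- a pure a priori error of order $|\ln h|\,h^{3-\frac{2}{s}}$ by \eqref{estimate:b1b}. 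Note also that $\bar y_h$ is not an admissible competitor in \eqref{oc:VI measure}, since it satisfies the state constraints only at the nodes; the paper instead uses $I_hy_a\le\bar y_h\le I_hy_b$ on $K$ together with the interpolation errors of $y_a$, $y_b$. Your final step recovering \eqref{yestimate:optimal order} from \eqref{uestimate:optimal order} is sound.
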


\begin{remark}\label{PNR}
In \cite{neitzel2015finite} Pfefferer at al. for problems in two and three dimensions present a similar error estimate for discrete (local) solutions in the vicinity of a local solution which satisfies a quadratic growth condition. Assuming \eqref{inequality:strict ph} we here use different techniques to prove an error estimate for the unique global discrete solutions which converge to the unique global solution of our optimization problem.
\end{remark}

Before we start presenting the proof of this result we collect some results concerning the uniform boundedness of the discrete optimal control $\bar u_h$, its state $\bar y_h$ and the associated multipliers $\bar p_h$ and $\bar \mu_h$. 

\begin{lemma}
\label{Thm:uymu bounds}
Let $\bar u_h \in U_{ad}$, $\bar y_h$, $\bar p_h \in X_{h0}$ and $(\bar \mu_j)_{x_j \in \mathcal N_h}$ be a solution of \eqref{oc:state h}--\eqref{oc:VI measure h} satisfying
\begin{align*}
\|\bar p_h\|_{L^q(\Omega)} \leq \eta(\alpha,r),  \quad 0<h \leq h_0.
\end{align*}
Then there exists a constant $C>0$, which is independent of $h$, such that
\begin{equation}  \label{bounds}
\|\bar u_h\|_{L^2(\Omega)}, \, \|\bar y_h\|_{H^1(\Omega)}, \, \|\bar y_h\|_{L^\infty(\Omega)}\,  , \, \|\bar \mu_h\|_{\mathcal{M}(K)} \leq C. 
\end{equation}
\end{lemma}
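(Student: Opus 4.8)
The plan is to establish the four bounds in the order listed, propagating from the control through the state to the multiplier, and to keep every constant independent of $h$.

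\emph{Control.} The variational inequality \eqref{oc:VI control h} characterises $\bar u_h$ as the pointwise projection of $-\tfrac{1}{\alpha}\bar p_h$ onto $[u_a,u_b]$, that is $\bar u_h(x)=\min(\max(u_a,-\tfrac{1}{\alpha}\bar p_h(x)),u_b)$. A pointwise case distinction gives $|\bar u_h(x)|\le \tfrac{1}{\alpha}|\bar p_h(x)|+C_0$, where $C_0$ collects whichever of $u_a,u_b$ is finite. Because $q=\tfrac{3r-2}{r-1}=3+\tfrac{1}{r-1}>2$ and $\Omega$ is bounded, $L^q(\Omega)\hookrightarrow L^2(\Omega)$, so the hypothesis $\|\bar p_h\|_{L^q(\Omega)}\le\eta(\alpha,r)$ yields $\|\bar p_h\|_{L^2(\Omega)}\le c\,\eta(\alpha,r)$ and hence a uniform bound on $\|\bar u_h\|_{L^2(\Omega)}$ (indeed on $\|\bar u_h\|_{L^q(\Omega)}$).

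\emph{State.} Testing \eqref{oc:state h} with $v_h=\bar y_h$, using that $\phi$ is increasing so that $(\phi(\bar y_h)-\phi(0))\bar y_h\ge 0$, and applying Poincar\'e's inequality on $X_{h0}$, I obtain $\|\bar y_h\|_{H^1(\Omega)}\le c(\|\bar u_h\|_{L^2(\Omega)}+1)$, which is bounded by the previous step. For the $L^\infty$ bound I would compare $\bar y_h=\mathcal G_h(\bar u_h)$ with the continuous state $y:=\mathcal G(\bar u_h)$: Theorem~\ref{Thm:H2 regularity state} together with the embedding $H^2(\Omega)\hookrightarrow C(\bar\Omega)$ (valid since $\Omega\subset\mathbb R^2$) gives $\|y\|_{L^\infty(\Omega)}\le c\|y\|_{H^2(\Omega)}\le c(1+\|\bar u_h\|_{L^2(\Omega)})$, while a finite-element maximum-norm error estimate on the quasi-uniform mesh controls $\|\bar y_h-y\|_{L^\infty(\Omega)}$. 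Concretely, splitting off the Lagrange interpolant and combining the logarithmically weighted $L^\infty$-stability of the discrete operator with $\|y-I_hy\|_{L^\infty(\Omega)}\le ch\,|y|_{H^2(\Omega)}$ gives $\|\bar y_h-y\|_{L^\infty(\Omega)}\le c\,|\ln h|\,h\,(1+\|\bar u_h\|_{L^2(\Omega)})\to 0$, which is in particular bounded.

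\emph{Multiplier.} Let $u_0\in U_{ad}$ and $\delta>0$ realise the linearised Slater condition \eqref{slater condition h}, and let $z_h:=\mathcal G_h'(\bar u_h)(u_0-\bar u_h)\in X_{h0}$ denote the discrete linearised state. Testing the adjoint equation \eqref{oc:adjoint h} with $z_h$, testing the linearised state equation with $\bar p_h$, and using the symmetry of the bilinear form $\int_\Omega\nabla\cdot\nabla + \phi'(\bar y_h)\,\cdot\,dx$ leads to
\[
\sum_{x_j\in\mathcal N_h}\bar\mu_j\,z_h(x_j)=\int_\Omega(u_0-\bar u_h)\bar p_h\,dx-\int_\Omega(\bar y_h-y_0)z_h\,dx .
\]
By Proposition~\ref{Thm:supp measure h} the node $x_j$ is active for $y_b$ whenever $\bar\mu_j>0$ and active for $y_a$ whenever $\bar\mu_j<0$, so the Slater inequalities force $\bar\mu_j z_h(x_j)\le-\delta|\bar\mu_j|$ at every node; hence the left-hand side is $\le-\delta\|\bar\mu_h\|_{\mathcal M(K)}$. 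Since $\phi'\ge 0$, testing the linearised equation with $z_h$ and Poincar\'e give $\|z_h\|_{L^2(\Omega)}\le c\|u_0-\bar u_h\|_{L^2(\Omega)}$, so the right-hand side is bounded by the bounds already obtained on $\bar u_h$, $\bar y_h$ and $\|\bar p_h\|_{L^2(\Omega)}$. This produces $\|\bar\mu_h\|_{\mathcal M(K)}\le c/\delta$.

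\emph{Main obstacle.} The per-node sign bookkeeping and the energy estimates are routine; the real difficulty is uniformity in $h$. It enters twice: the maximum-norm state estimate must hold with an $h$-independent constant, which is why the triangulation is assumed quasi-uniform, and the multiplier bound degenerates unless the Slater radius $\delta$ can be chosen independently of $h$. The decisive step is therefore to exhibit a single pair $(u_0,\delta)$ serving \eqref{slater condition h} for all small $h$. I would take the Slater data of the limit problem, which exist because the limit control $\bar u$ satisfies \eqref{slater condition}, and transfer them to the discrete problems: using $\bar u_h\to\bar u$ in $L^2(\Omega)$ together with the convergence of $\mathcal G_h(\bar u_h)+\mathcal G_h'(\bar u_h)(u_0-\bar u_h)$ to $\mathcal G(\bar u)+\mathcal G'(\bar u)(u_0-\bar u)$ at the constraint nodes, \eqref{slater condition} passes to \eqref{slater condition h} with $\delta$ replaced by $\delta/2$ once $h$ is small enough. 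Securing this uniform Slater condition is the part I expect to demand the most care.
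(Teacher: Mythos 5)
Your bounds for $\|\bar u_h\|_{L^2(\Omega)}$, $\|\bar y_h\|_{H^1(\Omega)}$ and $\|\bar y_h\|_{L^\infty(\Omega)}$ are correct and self-contained: the projection formula from \eqref{oc:VI control h} together with $L^q(\Omega)\hookrightarrow L^2(\Omega)$ and the hypothesis on $\|\bar p_h\|_{L^q(\Omega)}$ gives the control bound, the energy estimate with the monotonicity of $\phi$ gives the $H^1$ bound, and the comparison with $\mathcal G(\bar u_h)$ plus a maximum-norm finite element estimate gives the uniform bound. The paper itself does not reprove any of this; it simply cites \cite{ali2016global} (Lemma~4.1 and the uniform convergence (4.16) there), so on these three items you are supplying the standard argument that the citation hides.

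The multiplier bound is where there is a genuine gap. The duality identity you derive from \eqref{oc:adjoint h} and the discrete linearized state equation, and the sign bookkeeping via Proposition~\ref{Thm:supp measure h}, are fine; but the estimate $\|\bar\mu_h\|_{\mathcal M(K)}\le c/\delta$ is worthless unless the Slater pair $(u_0,\delta)$ in \eqref{slater condition h} can be chosen independently of $h$, and neither a uniform discrete Slater condition nor a continuous one at a limit point is among the hypotheses of the lemma. Your proposed fix --- take the Slater data of the limit problem and transfer them using $\bar u_h\to\bar u$ in $L^2(\Omega)$ --- is circular relative to the logical structure of the paper: the convergence $\bar u_h\to\bar u$ (and the very existence of the limit quadruple, including the fact that $\bar u$ admits Slater data) is obtained in \cite{ali2016global}, Theorem~4.2, as a consequence of precisely the uniform bounds \eqref{bounds} you are trying to prove; in particular the weak-$*$ compactness of $(\bar\mu_h)$ used there requires the multiplier bound. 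To close the argument you would have to either (i) add a uniform discrete Slater condition (fixed $u_0$ and $\delta$ for all $0<h\le h_0$) as an explicit hypothesis, or (ii) first establish $\bar u_h\to\bar u$ and the continuous Slater property of $\bar u$ using only the control and state bounds (which is possible but is a separate, nontrivial argument involving the construction of discrete feasible comparison points), and only then run your transfer argument. As written, the decisive step you yourself flag as the hard one is not actually carried out.
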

\begin{proof}
The uniform boundedness of $\|\bar u_h\|_{L^2(\Omega)}, \, \|\bar y_h\|_{H^1(\Omega)} , \, \|\bar \mu_h\|_{\mathcal{M}(\tilde K)}$ is shown in \cite[Lemma~4.1]{ali2016global} while the one of $\|\bar y_h\|_{L^\infty(\Omega)}$ is a consequence of the uniform convergence \cite[(4.16)]{ali2016global}.
\end{proof}

Next, let us  introduce the  auxiliary functions $\tilde{y}^h \in H^2(\Omega)\cap H^1_0(\Omega)$, $\tilde{y}_h \in  X_{h0}$, $\tilde{p}_h \in  X_{h0}$ as the solutions of 
\begin{align}
&\int_\Omega \nabla \tilde y^h   \cdot  \nabla v +  \phi( \tilde y^h) v \, dx  =  \int_\Omega \bar u_h v \,dx \qquad \forall \, v \in H^1_0(\Omega), \label{equation:b0} \\
&\int_\Omega \nabla \tilde y_h   \cdot  \nabla v_h +  \phi( \tilde y_h) v_h \, dx = \int_\Omega \bar u v_h \,dx \qquad \forall \, v_h \in X_{h0}, \label{equation:b1}  \\
&\int_\Omega \nabla \tilde p_h \cdot \nabla v_h + \phi'(\bar y)\tilde p_h v_h \, dx 
  \nonumber \\
& \enspace = \int_\Omega (\bar y -y_0) v_h \, dx  + \int_K v_h \, d\bar \mu  \qquad \forall \, v_h \in X_{h0}.
\label{equation:b2}  
\end{align}

\begin{lemma} Let $\tilde y^h, \, \tilde y_h$ and $\tilde p_h$ be as above and $\Omega_0$ an open set such that $\overline{\Omega}_0  \subset \Omega$ and $K \subset \Omega_0$.
Then we have
\begin{align}
\label{estimate:b0}
\|\bar y_h - \tilde y^h\|_{L^2(\Omega)} + h \, \|\bar y_h - \tilde y^h\|_{L^{\infty}(\Omega)}  &\leq c h^2 \big( \|\bar u_h\|_{L^2(\Omega)} +1 \big),\\
\label{estimate:b1a}
\|\tilde y_h - \bar y\|_{L^2(\Omega)} + h \,  \|\tilde y_h - \bar y\|_{L^{\infty}(\Omega)}&\leq c h^2 \big ( \|\bar u\|_{L^2(\Omega)} +1 \big), \\
\label{estimate:b1b}
\|\tilde y_h - \bar y\|_{L^\infty(\Omega_0)} &\leq c |\ln h| h^{3-\frac{2}{s}} \big (\| \bar u\|_{W^{1,s}(\Omega)} +1\big), \\
\label{estimate:b2} 
\|\tilde p_h - \bar p\|_{L^2(\Omega)} &\leq c h \big( \|\bar y - y_0\|_{L^2(\Omega)} + \|\bar \mu \|_{\mathcal{M}(K)} \big). 
\end{align}
\end{lemma}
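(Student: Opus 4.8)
The plan is to prove each of the four estimates by comparing the auxiliary functions to the exact solutions via standard finite-element error analysis, exploiting the uniform bounds from Lemma~\ref{Thm:uymu bounds} to keep all constants independent of $h$.

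For \eqref{estimate:b0} I would observe that $\tilde y^h = \mathcal G(\bar u_h)$ solves the continuous semilinear equation with right-hand side $\bar u_h$, whereas $\bar y_h = \mathcal G_h(\bar u_h)$ is its linear finite-element approximation. Thus $\bar y_h - \tilde y^h$ is the FE error for the semilinear state equation with a fixed load $\bar u_h$. By Theorem~\ref{Thm:H2 regularity state} we have $\|\tilde y^h\|_{H^2(\Omega)} \le c(1+\|\bar u_h\|_{L^2(\Omega)})$, so an $L^2$ estimate of order $h^2$ and an $L^\infty$ estimate of order $h$ (or $h^2|\ln h|$, absorbed into the stated $h$) follow from the Aubin--Nitsche duality argument together with the monotonicity of $\phi$ to handle the nonlinear term; the interpolation error $\|\tilde y^h - I_h\tilde y^h\|$ is controlled by $h^2\|\tilde y^h\|_{H^2}$. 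Estimate \eqref{estimate:b1a} is entirely symmetric: $\tilde y_h = \mathcal G_h(\bar u)$ is the FE approximation of $\bar y = \mathcal G(\bar u)$, and since $\bar u \in U_{ad}$ has $\|\bar u\|_{L^2(\Omega)}$ bounded, the same argument with $\|\bar y\|_{H^2(\Omega)} \le c(1+\|\bar u\|_{L^2(\Omega)})$ gives the claim.

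For the sharper interior estimate \eqref{estimate:b1b} I would use the extra regularity of $\bar u$. Recall from the discussion after the first-order system that $\bar u \in W^{1,s}(\Omega)$ for $1<s<2$, hence by the (commented-out) $W^{2,p}$-regularity with $p = \tfrac{2s}{2-s}$ the state $\bar y$ lies in $W^{2,p}(\Omega)$, giving an improved local interpolation error of order $h^{3-2/s}$ in $L^\infty$. On the interior subdomain $\Omega_0 \subset\subset \Omega$ one invokes a weighted/interior maximum-norm estimate (the Schatz--Wahlbin type localization, cf.\ the commented Lemma on the Ritz projection) which produces the logarithmic factor $|\ln h|$ multiplying the best $L^\infty$ approximation error, together with a lower-order $L^2$ remainder that is already controlled by \eqref{estimate:b1a}. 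Combining these yields $\|\tilde y_h - \bar y\|_{L^\infty(\Omega_0)} \le c|\ln h|\, h^{3-2/s}(\|\bar u\|_{W^{1,s}(\Omega)}+1)$.

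For the adjoint estimate \eqref{estimate:b2}, note that $\tilde p_h$ is the finite-element solution of a \emph{linear} elliptic problem whose continuous counterpart is exactly the adjoint equation \eqref{oc:adjoint} solved by $\bar p$, with the same data $(\bar y - y_0)$ and measure $\bar \mu$. The delicate point is that the right-hand side contains the Borel measure $\bar\mu$, so $\bar p$ only has $W^{1,s}$ (i.e.\ low) regularity and the duality argument must be set up carefully: I would test the error equation against the FE solution of a dual problem with smooth $L^2$ data, use that $\int_K v_h\, d\bar\mu$ makes sense because $v_h$ is continuous, and control the measure term by $\|\bar\mu\|_{\mathcal M(K)}$ times the $L^\infty$ error of the dual approximation, which on functions from $H^2\cap H^1_0$ data behaves like $h$. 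This is where I expect the main obstacle to lie: obtaining the clean first-order $h$ rate in the $L^2$-norm for an adjoint state driven by a measure requires the interior $L^\infty$ bounds on the dual FE approximation near $K$ (again a Schatz--Wahlbin localization using $K\subset\Omega_0\subset\subset\Omega$), rather than a naive global energy estimate that would only give $h^{1/2}$-type rates. The other three estimates are routine once the regularity bounds from Theorem~\ref{Thm:H2 regularity state} and the interior estimates are in place.
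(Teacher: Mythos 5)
Your sketches are essentially sound, but note that the paper does not actually prove this lemma: all four estimates are disposed of by citation. Estimates \eqref{estimate:b0} and \eqref{estimate:b1a} are Theorems~1 and~2 of \cite{casas2002uniform}, \eqref{estimate:b1b} is \cite[Theorem~3.5]{neitzel2015finite}, and \eqref{estimate:b2} is \cite[Theorem~3]{casas1985}. What you have written is a reconstruction of the substance of those cited results, and your reconstruction matches them: \eqref{estimate:b0} and \eqref{estimate:b1a} are indeed the standard $L^2$/$L^\infty$ finite element error bounds for the semilinear state equation with a fixed $L^2$ load (duality plus monotonicity of $\phi$, with the $L^\infty$ rate $h=h^{2-d/2}$ in $d=2$ obtained by an inverse estimate); \eqref{estimate:b1b} rests, as you say, on $\bar u\in W^{1,s}$ giving $\bar y\in W^{2,p}$ with $p=\tfrac{2s}{2-s}$ and $2-\tfrac{2}{p}=3-\tfrac{2}{s}$, combined with a Schatz--Wahlbin interior maximum-norm estimate whose $L^2$ remainder is absorbed via \eqref{estimate:b1a}; and \eqref{estimate:b2} is the Casas duality argument for measure data, bounding the measure term by $\|\bar\mu\|_{\mathcal M(K)}$ times the $L^\infty$ error of the dual approximation. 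Two small corrections to your account: first, for \eqref{estimate:b2} no interior localization near $K$ is needed --- the \emph{global} $L^\infty$ bound $\|w-w_h\|_{L^\infty(\Omega)}\le c\,h\,\|w\|_{H^2(\Omega)}$ for the dual problem on the convex polygon already yields the clean first-order rate, which is how \cite{casas1985} proceeds; second, the $W^{2,p}$ regularity used for \eqref{estimate:b1b} is only available for $p$ below a threshold determined by the maximal interior angle of the convex polygon, so the admissible range of $s$ is in general restricted --- a point the paper itself leaves implicit by deferring to \cite{neitzel2015finite}. Neither point affects the correctness of your overall strategy.
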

\begin{proof} The estimates \eqref{estimate:b0} and \eqref{estimate:b1a} can be found as Theorems 1 and 2 in \cite{casas2002uniform}. On the other hand, \eqref{estimate:b1b} follows from  \cite[Theorem 3.5]{neitzel2015finite}. 
Finally, the estimate \eqref{estimate:b2} is a consequence of \cite[Theorem~3]{casas1985}.
\end{proof}

\noindent
{\bf Proof of Theorem \ref{Thm:error estimate}:}
Testing \eqref{oc:VI control} with $\bar u_h$ and \eqref{oc:VI control h} with $\bar u$ and adding the resulting inequalities gives
\begin{align*}
 \int_\Omega (\bar p_h-\bar p)(\bar u- \bar u_h ) - \alpha (\bar u_h-\bar u)^2 \, dx  \geq 0
\end{align*}
from which we obtain 
\begin{align} 
\alpha \| \bar u- \bar u_h \|^2_{L^2(\Omega)} &\leq  \int_\Omega (\bar u- \bar u_h ) (\bar p_h-\bar p)\, dx \nonumber\\
& = \underbrace{\int_\Omega (\tilde p_h-\bar p)(\bar u- \bar u_h ) \, dx}_{S_1} + \int_\Omega (\bar p_h-\tilde p_h)(\bar u- \bar u_h ) \, dx. \label{inequality:b0}
\end{align}
We see that from \eqref{oc:state h} and \eqref{equation:b1} with the choice $v_h=\bar p_h -\tilde p_h$ that
\begin{align*}
& \int_\Omega (\bar p_h-\tilde p_h)(\bar u- \bar u_h ) \, dx =   \int_\Omega [ \phi(\tilde y_h)-\phi(\bar y_h) ]  (\bar p_h -\tilde p_h) \, dx \\
& \quad  + \int_\Omega \nabla(\tilde y_h -\bar y_h)\cdot\nabla(\bar p_h -\tilde p_h) \,dx \\
&\quad = \underbrace{ \int_\Omega (\bar y_h-\bar y)(\tilde y_h - \bar y_h ) \, dx }_\text{$S_{2}$} + \underbrace{ \int_{K} (\tilde y_h- \bar y_h)\, d\bar\mu_h - \int_K (\tilde y_h - \bar y_h) \, d \bar \mu }_\text{$S_{3}$}  \\\
&\quad + \underbrace{ \int_\Omega [ \phi(\tilde y_h)-\phi(\bar y_h) ]  (\bar p_h -\tilde p_h) \, dx  - \int_\Omega [\phi'(\bar y_h)\bar p_h - \phi'(\bar y) \tilde p_h](\tilde y_h -\bar y_h) \, dx }_\text{$S_{4}$}, 
\end{align*}
where we utilized \eqref{oc:adjoint h} and \eqref{equation:b2} with the test function $v_h=\tilde y_h -\bar y_h$ to rewrite the term containing the gradients in the first equality. Consequently, adding the terms $S_{2}$, $S_{3}$, $S_4$ to $S_1$ in \eqref{inequality:b0}  gives
\begin{align}
\label{inequality:b1}
\alpha \| \bar u- \bar u_h \|^2_{L^2(\Omega)} \leq  \sum_{i=1}^4 S_i. 
\end{align}

\noindent
Young's inequality together with  \eqref{estimate:b2} implies that 
\begin{align*}
S_1 &\leq \| \tilde p_h-\bar p \|_{L^2(\Omega)} \|\bar u- \bar u_h \|_{L^2(\Omega)} 
 \leq \frac{1}{2\alpha \epsilon} \| \tilde p_h-\bar p \|^2_{L^2(\Omega)} + \frac{\alpha \epsilon}{2} \|\bar u- \bar u_h \|^2_{L^2(\Omega)} \\
& \leq  \frac{c}{\alpha \epsilon} h^2  \big( \|\bar y - y_0\|^2_{L^2(\Omega)} + \|\bar \mu \|^2_{\mathcal{M}(K)} \big) + \frac{\alpha \epsilon}{2} \|\bar u- \bar u_h \|^2_{L^2(\Omega)} \\
& =  \frac{\alpha \epsilon}{2} \|\bar u- \bar u_h \|^2_{L^2(\Omega)}  + c_{\epsilon} h^2.
\end{align*}
In a similar way we deduce with the help of \eqref{estimate:b1a} 
\begin{align*}
S_2 &=- \| \bar y_h-\bar y \|^2_{L^2(\Omega)} +\int_\Omega (\bar y_h-\bar y)(\tilde y_h - \bar y ) \, dx \\
&\leq - \| \bar y_h-\bar y \|^2_{L^2(\Omega)} +\| \bar y_h-\bar y \|_{L^2(\Omega)} \| \tilde y_h-\bar y \|_{L^2(\Omega)}\\
& \leq - \| \bar y_h-\bar y \|^2_{L^2(\Omega)}+ \frac{\epsilon}{2}\| \bar y_h-\bar y \|^2_{L^2(\Omega)} +\frac{1}{2\epsilon} \| \tilde y_h-\bar y \|^2_{L^2(\Omega)} \\
&  \leq ( \frac{\epsilon}{2} - 1)\| \bar y_h-\bar y \|^2_{L^2(\Omega)} +\frac{c}{\epsilon} h^4 \big ( \|\bar u\|^2_{L^2(\Omega)} +1 \big) \\
&  \leq  ( \frac{\epsilon}{2} - 1)\| \bar y_h-\bar y \|^2_{L^2(\Omega)} + c_{\epsilon} h^4.
\end{align*}
Let us next consider the first integral in $S_3$. Using $\bar\mu_h=\bar\mu^b_h-\bar\mu^a_h$, Proposition \ref{Thm:supp measure h}, the fact that $y_a \leq \bar y \leq y_b$ on $K$, Lemma~\ref{Thm:uymu bounds} and \eqref{estimate:b1b} we have
\begin{eqnarray}
\lefteqn{
\int_{K}  (\tilde y_h -\bar y_h) \, d\bar\mu_h = \int_{K}  (\tilde y_h-y_b) \, d\bar{\mu}_h^b -  \int_{K}  (\tilde y_h-y_a) \, d\bar{\mu}_h^a }  \nonumber  \\
& \leq &  \int_{K}  (\tilde y_h-\bar y) \, d\bar{\mu}_h^b +  \int_{K}  (\bar y-\tilde y_h) \, d\bar{\mu}_h^a 
\leq \|\tilde y_h - \bar y\|_{L^\infty(\Omega_0)} \|\bar{\mu}_h\|_{\mathcal{M}(K)} \nonumber \\ 
& \leq & c |\ln h| h^{3-\frac{2}{s}} \big (\| \bar u\|_{W^{1,s}(\Omega)} +1\big). \label{first integral}
\end{eqnarray}
\noindent
To estimate the second integral in $S_3$ we use Proposition~\ref{Thm:supp measure}, the fact that $ I_h y_a \leq \bar y_h \leq I_h y_b$ in $K$, a well--known interpolation  
estimate and \eqref{estimate:b1b} to obtain
\begin{align}
& \int_K  (\bar y_h - \tilde y_h ) \, d\bar\mu  = \int_K  (\bar y_h - \tilde y_h ) \, d\bar\mu_b - \int_K  (\bar y_h - \tilde y_h ) \, d\bar\mu_a \nonumber\\
& \quad\leq  \int_K  (I_h y_b -y_b) \, d\bar\mu_b + \int_K ( \bar y - \tilde y_h ) \, d\bar\mu_b + \int_K  ( \tilde y_h - \bar y ) \, d\bar\mu_a   +   \int_K  ( y_a - I_h y_a ) \, d\bar\mu_a  \nonumber\\
& \quad\leq  \|\bar \mu\|_{\mathcal{M}(K)} \Big( \| \tilde y_h - \bar y \|_{L^\infty(K)} + \| y_a - I_h y_a \|_{L^\infty(K)} + \| y_b - I_h y_b \|_{L^\infty(K)} \Big ) \nonumber\\
&\quad \leq c \|\bar \mu\|_{\mathcal{M}(K)} \Big ( |\ln h| h^{3-\frac{2}{s}} (\|\bar u\|_{W^{1,s}(\Omega)} +1 ) + c h^2 \bigl( \|y_a\|_{W^{2,\infty}(\Omega)} +  \|y_b\|_{W^{2,\infty}(\Omega)} \bigr) \Big) \nonumber\\
& \quad\leq c |\ln h| h^{3-\frac{2}{s}} \|\bar \mu\|_{\mathcal{M}(K)} \bigl(  \|\bar u\|_{W^{1,s}(\Omega)}  +  1 \bigr).  \label{second integral}
\end{align}
Combining \eqref{first integral} and \eqref{second integral}  yields 
\begin{equation*}
S_3 \leq c  |\ln h| h^{3-\frac{2}{s}} \bigl( \|\bar u\|_{W^{1,s}(\Omega)}  +  1 \bigr).
\end{equation*}
\noindent
Let us next turn  $S_4$, which we rewrite as 
\begin{align*}
& S_4 =\underbrace{ \int_\Omega [\phi(\tilde y_h)-\phi(\bar y_h)+\phi'(\bar y_h)(\bar y_h-\tilde y_h)] \bar p_h \,dx}_{S_{4.1}} + \underbrace{ \int_\Omega [\phi(\tilde y^h)-\phi(\bar y)+\phi'(\bar y)(\bar y-\tilde y^h)] \bar p \,dx }_{S_{4.2}} \\
& + \underbrace{ \int_\Omega [\phi(\tilde y^h)-\phi(\bar y)+\phi'(\bar y)(\bar y-\tilde y^h)](\tilde p_h -\bar p) \,dx }_{S_{4.3}} + \underbrace{ \int_\Omega [\phi(\bar y)-\phi(\tilde y_h)+\phi'(\bar y)(\tilde y_h-\bar y)]\tilde p_h \,dx }_{S_{4.4}} \\
& + \underbrace{ \int_\Omega [\phi(\bar y_h)-\phi(\tilde y^h) +\phi'(\bar y)(\tilde y^h -\bar y_h )]\tilde p_h \,dx}_{S_{4.5}}.
\end{align*}

\noindent
In order to estimate $S_{4.1}$ we first observe that $S_{4.1}=R_h(u_h)$ for the choice $y_h=\tilde y_h$, where $R_h(u_h)$ is defined at the bottom of page 266 in 
\cite{ali2016global}. Retracing the steps in \cite{ali2016global} leading to (3.11) we infer that
\begin{equation}  \label{side calculation: 3}
\displaystyle
|  S_{4.1}  |   \leq 2 \alpha^{-\frac{\rho}{2}} L_r C^{\frac{2r-2}{r}}_q d_r e_r  \|\bar p_h\|_{L^q(\Omega)} 
 \Big(  \frac{1}{2} \| \tilde y_h- \bar y_h \|_{L^2(\Omega)}^2 +   \frac{\alpha}{2} \| u_h- \bar u_h \|_{L^2(\Omega)}^2   \Big),   
\end{equation}
where $q$ and $\rho$ are defined immediately after \eqref{eta def}, while
\[ 
L_r = M \bigl( \frac{r-1}{2r-1} \bigr)^{(r-1)/r}, \quad 
d_r = q^{-1/q}  r^{-1/r} \rho^{-\rho}, 
e_r = \bigl( 1 - \frac{\rho}{2} \bigr)^{1- \frac{\rho}{2}} \bigl( \frac{\rho}{2} \bigr)^{\frac{\rho}{2}}.
\]
In view of the definition of $\eta(\alpha,r)$ and \eqref{inequality:strict ph} this implies 
\begin{eqnarray*}
| S_{4.1} |& \leq & \eta(\alpha,r)^{-1} \|\bar p_h\|_{L^q(\Omega)} \Big(\dfrac{1}{2}\|\tilde y_h-\bar y_h\|^2_{L^2(\Omega)} + \dfrac{\alpha}{2}\|\bar u -\bar u_h\|^2_{L^2(\Omega)}  \Big) \\
& \leq & \kappa \Big(\dfrac{1}{2}\|\tilde y_h-\bar y_h\|^2_{L^2(\Omega)} + \dfrac{\alpha}{2}\|\bar u -\bar u_h\|^2_{L^2(\Omega)}  \Big).
\end{eqnarray*}
Since
\begin{eqnarray*}
\Vert \tilde y_h - \bar y_h \Vert^2_{L^2(\Omega)} & \leq & (1+ \epsilon) \|\bar y_h-\bar y\|^2_{L^2(\Omega)} + c_{\epsilon} \|\tilde y_h-\bar y\|^2_{L^2(\Omega)} \\
& \leq & (1+ \epsilon) \|\bar y_h-\bar y\|^2_{L^2(\Omega)} + c_{\epsilon} h^4
\end{eqnarray*}
by \eqref{estimate:b1a}, we finally  obtain
\begin{displaymath}
| S_{4.1} | \leq \kappa
 \Big(\frac{1+\epsilon}{2}\|\bar y_h-\bar y\|^2_{L^2(\Omega)} + \frac{\alpha}{2}\|\bar u -\bar u_h\|^2_{L^2(\Omega)}  \Big) + c_{\epsilon} h^4.
\end{displaymath}
Using \eqref{inequality:strict p}, \eqref{estimate:b0} and \eqref{bounds}, we derive in a similar way   
\begin{align*}
| S_{4.2} | & \leq \eta(\alpha,r)^{-1} \|\bar p\|_{L^q(\Omega)} \Big(\dfrac{1}{2}\|\tilde y^h-\bar y\|^2_{L^2(\Omega)} + \dfrac{\alpha}{2}\|\bar u_h -\bar u\|^2_{L^2(\Omega)}  \Big) \\
& \leq \kappa
 \Big(\frac{1+\epsilon}{2}\|\bar y_h-\bar y\|^2_{L^2(\Omega)} + \dfrac{\alpha}{2}\|\bar u_h -\bar u\|^2_{L^2(\Omega)}  \Big) +  c_{\epsilon} \| \tilde y^h - \bar y_h \|^2_{L^2(\Omega)}\\
 & \leq  \kappa
 \Big(\frac{1+\epsilon}{2}\|\bar y_h-\bar y\|^2_{L^2(\Omega)} + \dfrac{\alpha}{2}\|\bar u_h -\bar u\|^2_{L^2(\Omega)}  \Big) + c_{\epsilon} h^4.
\end{align*}
Since $\phi \in C^2$ and $\Vert \tilde y^h \Vert_{L^{\infty}(\Omega)}$ is uniformly bounded in $h$ (in view of \eqref{estimate:b0} and \eqref{bounds}) we infer with the help of Lemma \ref{Thm:lipschitz map} and  \eqref{estimate:b2} 
\begin{align*}
| S_{4.3} |  & \leq c \Vert \tilde y^h - \bar y \Vert_{L^{\infty}(\Omega)} \,  \|\tilde y^h -\bar y\|_{L^2(\Omega)}\|\tilde p_h -\bar p\|_{L^2(\Omega)} \\
& \leq  c h  \|\bar u_h -\bar u\|_{L^2(\Omega)} \big( \|\bar y - y_0\|_{L^2(\Omega)} + \|\bar \mu \|_{\mathcal{M}(K)} \big) \\
& \leq  \frac{\alpha \epsilon}{2}\|\bar u_h -\bar u\|^2_{L^2(\Omega)} + c_{\epsilon} h^2.
\end{align*}

\noindent
In a similar way we obtain  using \eqref{estimate:b1a} and \eqref{estimate:b0}
\begin{align*}
| S_{4.4} | + | S_{4.5} |  & \leq   c \bigl(   \|\tilde y_h -\bar y\|_{L^2(\Omega)} + \|\tilde y^h -\bar y_h\|_{L^2(\Omega)} \bigr) \|\tilde p_h\|_{L^2(\Omega)} \\
 & \leq c h^2 \bigl( \|\bar u\|_{L^2(\Omega)} + \|\bar u_h\|_{L^2(\Omega)} +1 \bigr) \|\tilde p_h\|_{L^2(\Omega)} \\
 & \leq c h^2,
\end{align*}
where we note that  $\|\tilde p_h\|_{L^2(\Omega)}$ is uniformly bounded for sufficiently small $h$ in view of \eqref{estimate:b2}. 
\noindent
Collecting the estimates for $S_{4.1},\ldots,S_{4.5}$, we conclude that  $S_{4}$ can be bounded by  
\begin{align*}
S_{4} \leq 
  \kappa (1+\epsilon)\|\bar y_h-\bar y\|^2_{L^2(\Omega)} + (\kappa \alpha  + \frac{\alpha \epsilon}{2} ) \|\bar u_h -\bar u\|^2_{L^2(\Omega)} + c_{\epsilon} h^2.
\end{align*}
Inserting the estimates of the terms $S_1, \ldots, S_4$ into \eqref{inequality:b1} yields
\begin{align*}
\alpha  \|\bar u_h -\bar u\|^2_{L^2(\Omega)} & \leq \big( \kappa (1+\epsilon) + (\frac{\epsilon}{2}-1)   \big) \|\bar y_h-\bar y\|^2_{L^2(\Omega)}  + \alpha (\kappa + \epsilon ) \|\bar u_h -\bar u\|^2_{L^2(\Omega)} \\
& \quad + c_{\epsilon} |\ln h| h^{3-\frac{2}{s}} \bigl( \Vert \bar u \Vert_{W^{1,s}(\Omega)} + 1 \bigr). 
\end{align*}
Since $\kappa <1$,  choosing $\epsilon >0$ to be small enough in the above expression yields the existence  of $c >0$ independent of $h$ such that
\begin{equation}  \label{prelimbound}
 \|\bar u_h -\bar u\|^2_{L^2(\Omega)} +  \|\bar y_h-\bar y\|^2_{L^2(\Omega)} \leq  c |\ln h| h^{3-\frac{2}{s}}  \bigl( \Vert \bar u \Vert_{W^{1,s}(\Omega)} + 1 \bigr).
\end{equation}
Let us next establish an upper bound for $\| \nabla( \bar y_h-\bar y) \|_{L^2(\Omega)}$. To this end we introduce $R_h \bar y$ as the Ritz projection of $\bar y$, i.e
\begin{displaymath}
\int_{\Omega} \nabla R_h \bar y \cdot \nabla w_h \, dx = \int_{\Omega} \nabla \bar y \cdot \nabla w_h \, dx \qquad \forall  \, w_h \in X_{h0}.
\end{displaymath}
Let us first derive an upper bound on  $\| \nabla (\bar y_h-R_h \bar y)\|_{L^2(\Omega)}$. To begin, from the definition of $R_h \bar y$ and the weak formulation of $\bar y$ we have
\begin{align*}
\int_{\Omega} \nabla R_h \bar y \cdot \nabla w_h \, dx = \int_{\Omega} \nabla  \bar y \cdot \nabla w_h \, dx = \int_{\Omega}  \bar u   w_h \, dx - \int_{\Omega}  \phi(\bar y)   w_h \, dx \quad \forall \, w_h \in X_{h0}.
\end{align*}
If we combine this relation with \eqref{oc:state h} we obtain for all $w_h \in X_{h0}$ that
\begin{align*}
\int_{\Omega} \nabla (R_h \bar y-\bar y_h) \cdot \nabla w_h \, dx = \int_{\Omega}  (\bar u- \bar u_h )   w_h \, dx + \int_{\Omega} [ \phi(\bar y_h)-\phi(\bar y) ]   w_h \, dx.
\end{align*}
Using $w_h = R_h \bar y-\bar y_h$ in the previous variational equation and observing that  $\|\bar y_h\|_{L^\infty(\Omega)}$ is uniformly bounded in $h$ we deduce that
\begin{eqnarray*}
\lefteqn{ \int_{\Omega} |\nabla (R_h \bar y-\bar y_h)|^2  \, dx } \\
& \leq &  \big( \|\bar u- \bar u_h\|_{L^2(\Omega)}  + \|\phi(\bar y_h)-\phi(\bar y)\|_{L^2(\Omega)} \big) \|R_h \bar y-\bar y_h\|_{L^2(\Omega)} \\
& \leq & c \big( \|\bar u- \bar u_h\|_{L^2(\Omega)}  + \|\bar y_h-\bar y\|_{L^2(\Omega)} \big) \|\nabla(R_h \bar y-\bar y_h)\|_{L^2(\Omega)}
\end{eqnarray*}
by Poincar\'{e}'s inequality. Thus,
\begin{align}
\label{inequality_b}
\| \nabla (\bar y_h-R_h \bar y)\|_{L^2(\Omega)} \leq c \big( \|\bar u- \bar u_h\|_{L^2(\Omega)}  +  \|\bar y_h-\bar y\|_{L^2(\Omega)} \big),
\end{align}
which together with a standard error bound for the Ritz projection and \eqref{prelimbound} implies
\begin{align*}
\| \nabla( \bar y_h-\bar y) \|_{L^2(\Omega)} & \leq \| \nabla( \bar y_h-R_h \bar y) \|_{L^2(\Omega)} + \| \nabla( R_h \bar y-\bar y)\|_{L^2(\Omega)} \nonumber\\
& \leq  c \bigl( \|\bar u- \bar u_h\|_{L^2(\Omega)}  +  \|\bar y_h-\bar y\|_{L^2(\Omega)} \bigr)   + c h \|\bar y\|_{H^2(\Omega)} \\
& \leq c |\ln h| h^{3-\frac{2}{s}}  \bigl( \Vert \bar u \Vert_{W^{1,s}(\Omega)} + 1 \bigr).
\end{align*}
It remains to prove the uniform estimate for  $\bar y_h - \bar y$. We obtain from \eqref{estimate:b0}, the continuous embedding $H^2(\Omega) \hookrightarrow C(\bar\Omega)$, Lemma~\ref{Thm:H2 lipschitz} and \eqref{prelimbound} that
\begin{eqnarray*}
\lefteqn{
\| \bar y_h -  \bar y\|_{L^\infty(\Omega)} \leq \| \bar y_h - \tilde y^h\|_{L^\infty(\Omega)} + \|\ \tilde y^h - \bar y \|_{L^\infty(\Omega)}  } \\
& \leq &  c h \big( \|\bar u_h\|_{L^2(\Omega)}+1 \big) + c \| \tilde y^h - \bar y\|_{H^2(\Omega)} 
\leq c h + c \|\bar u_h-\bar u\|_{L^2(\Omega)} \\
& \leq &  c |\ln h| h^{3-\frac{2}{s}}  \bigl( \Vert \bar u \Vert_{W^{1,s}(\Omega)} + 1 \bigr).
\end{eqnarray*}
This completes the proof of Theorem \ref{Thm:error estimate}.

\begin{remark}
The choice $K=\bar \Omega$ is in fact  allowed for Problem~$(\mathbb{P})$ provided that the bounds $y_a$, $y_b \in C(\bar \Omega)$ 
satisfy in addition to $y_a <y_b $ in $\bar\Omega$ the compatibility condition $y_a <0<y_b$ on $\partial \Omega$. In this case the set
$\mathcal{N}_h$, which appears in the discrete optimal control problem, should be defined as
\[
\mathcal{N}_h := \{x_j | x_j \mbox{ is  a vertex of } T \in \mathcal{T}_h,  x_j \notin \partial \Omega \}.
\]
We claim that the assertion of  Theorem~\ref{Thm:error estimate} remains valid in this setting. To see this, we note that the only change in the proof concerns the term $S_3$ which now reads
\begin{align*}
\int_{\bar \Omega}  (\tilde y_h-\bar y) \, d\bar{\mu}_h   - \int_{\bar \Omega}  (\tilde y_h-\bar y) \, d\bar{\mu}.   
\end{align*}
However, using the fact that $y_a$, $y_b \in C(\bar \Omega)$,  $y_a <y_b $ in $\bar\Omega$ and $y_a <0<y_b$ on $\partial \Omega$, it can be shown that there exists $\Omega_0 \subset \subset \Omega$ such that $\mbox{supp} (\bar\mu) \subset \Omega_0$ and  $\mbox{supp} (\bar\mu_h) \subset \Omega_0$ for $h$  small enough, see \cite[Corollary~5.4]{casas2014new}. We may then use again \eqref{estimate:b1b} and argue in the same way as before. 
\end{remark}

\section{Numerical Example}
\label{sec:numerics}

We now examine numerically the error bounds established in Theorem~\ref{Thm:error estimate}. For this purpose, we consider the following example  taken from \cite[Section~7]{neitzel2015finite}, in which  
Problem $(\mathbb{P})$ is considered with the following choice for the data: $\Omega:=(0,1) \times (0,1)$,  $\phi(s)=s^3$,  $\alpha=10^{-2}$, $y_0 :=-1$, $U_{ad}=L^2(\Omega)$, $y_b=\infty$ and
\[
y_a(x):=-\frac{1}{2}+\frac{1}{2}\min ( x_1 + x_2, 1+ x_1 - x_2, 1- x_1 + x_2, 2-x_1 -x_2 ).
\]
It was shown in \cite{ali2016global} that this example admits a unique global solution. In fact, it is easy to see that  \eqref{assumption:D2phi} holds for  $\phi(s)=s^3$ with $r=2$ and $M=2\sqrt{3}$, hence $q=4$ in \eqref{eta def}. 
After applying the variational discretization, the numerical solution of the resulting discrete optimality system \eqref{oc:state h}--\eqref{oc:VI measure h} is obtained by the semismooth Newton method proposed in \cite{deckelnick2007finite} whose extension to semilinear elliptic control problems is straightforward. 
Consequently, the condition \eqref{18strich h} from Theorem~\ref{Thm:global minima h} now reads
\[
\|\bar p_h\|_{L^4(\Omega)} \leq  5^{-\frac{5}{8}} 3^{\frac{3}{8}} \sqrt{2} C^{-1}_4 \alpha^{\frac{3}{8}} =: \eta(\alpha),
\]
where $C_4  \approx 0.648027075$ is an upper bound for the constant in Gagliardo-Nirenberg inequality, precisely it is the bound $C_4^{(3)}$ from \cite[Theorem~7.3]{ali2016global}.  Figure~\ref{figure:unique} compares  the quantities  $\|\bar p_h\|_{L^4(\Omega)}$ and $\eta(\alpha)$ for several choices of $\alpha$, including $\alpha=10^{-2}$. It can be seen from this figure that  the previous condition is satisfied strictly which in turn implies that the considered 
example admits a unique global solution. 
The  global minimum of the considered example together with its state and the associated multipliers are presented graphically in Figure~\ref{figure: example y3 case(NeitzelExample)}. We see  that the state constraints are active at one point, namely $\tilde{x}:=(\frac{1}{2},\frac{1}{2})$, and the corresponding multiplier is approximately given by
\[
\bar \mu^a_h=0.3386 \, \delta_{\tilde{x}},
\]
where $\delta_{\tilde{x}}$ is a Dirac measure at $\tilde{x}$.  We can easily find a polygonal subdomain $K \subset \subset \Omega$ that contains the active point $\tilde x$  so that Assumption~\ref{assumption:3} holds. 
Consequently, we are expecting the bound $\sqrt{|\ln h|}  h^{\frac{3}{2}-\frac{1}{s}}$, or equivalently  $h^{1-\varepsilon}$ for arbitrarily small $\varepsilon >0$, for the computed errors according to  Theorem~\ref{Thm:error estimate}.

To deduce the convergence rates numerically,  we compute the experimental order of convergence (EOC) which is defined  as
\begin{align*}
\mbox{EOC}:= \dfrac{\log E(h_i) - \log E(h_{i-1})}{\log h_i - \log h_{i-1}},
\end{align*}
where $E$ is a given positive error functional and $h_{i-1}$, $h_{i}$ are two consecutive mesh sizes.  For our experiment, we consider the error functionals
\begin{align*}
E_{u_{L2}}(h_i) &:=\|\bar u_{ref}- \bar u_{h_i}\|_{L^2(\Omega)}, \\
E_{y_{H1}}(h_i) &:=\|\bar y_{ref}- \bar y_{h_i}\|_{H^1(\Omega)}, \\
E_{y_{L2}}(h_i) &:=\|\bar y_{ref}- \bar y_{h_i}\|_{L^2(\Omega)}, \\
E_{y_{L\infty}}(h_i) &:=\|\bar y_{ref}- \bar y_{h_i}\|_{L^{\infty}(\Omega)}, 
\end{align*}
and denote the corresponding experimental orders of convergence by EOC$_{u_{L2}}$, EOC$_{y_{H1}}$, EOC$_{y_{L2}}$ and EOC$_{y_{L\infty}}$, respectively. Furthermore, we consider the sequence of mesh sizes $h_i=2^{-i}\sqrt{2}$, for $i=1, \ldots, 9$. Since we don't have the exact solution  at hand,  we consider the numerical solution computed at mesh size $h_{10}=2^{-10}\sqrt{2}$ to be the reference solution, that is,  we define $\bar u_{ref}:= \bar u_{h_{10}}$ and $\bar y_{ref}:= \bar y_{h_{10}}$. 

Figure~\ref{F3} shows the values of our error functionals in dependence of $h$, and also illustrates the order of convergence. The computed values of the associated EOC are presented in Table~\ref{table:eoc}.

From the numerical findings we see that as the mesh size $h$ decreases the errors $E_{u_{L2}}(h)$ and $E_{y_{H1}}(h)$  behave like $O(h)$ which indicates that the convergence rate,  namely $O(h^{1-\varepsilon})$ for arbitrarily small $\varepsilon >0$, predicted in Theorem~\ref{Thm:error estimate} is optimal. On the other hand, for  $E_{y_{L2}}(h)$ and  $E_{y_{L\infty}}(h)$ we see the behaviour $O(h^2)$ and $O(h^{1.6})$, respectively, from which  we conclude that the error bounds for the discrete optimal state  in the spaces $L^2(\Omega)$ and $L^{\infty}(\Omega)$ which are deduced from the error bound of the discrete optimal control via the Lipschitz continuity of the control-to-state map are not sharp.

In fact, the $O(h^2)$ behaviour of $E_{y_{L2}}(h)$ could be explained in the light of the work \cite{neitzel2016wollner} where it was shown that for an elliptic control problem with finitely many pointwise inequality state constraints the error of the discrete optimal state in $L^2(\Omega)$ is of order $h^{4-d}$ up to logarithmic factor in $d=2$ or $d=3$ space dimensions  when the control problem is discretized by continuous, piecewise linear finite elements.





\begin{table}[h!]
\centering
\caption{EOC for the optimal control and its state.}
\label{table:eoc}
\setlength{\tabcolsep}{12pt}
\renewcommand{\arraystretch}{1.4}

\begin{tabular}{l l l l l}
\toprule
Levels & EOC$_{u_{L2}}$ &  EOC$_{y_{H1}}$ &  EOC$_{y_{L2}}$ &  EOC$_{y_{L\infty}}$ \\
\midrule[1.5pt]

1-2 & 1.186801 & 0.776001 & 1.124945 & 0.895581 \\
2-3 & 1.187645 & 0.833842 & 1.464788 & 1.058334 \\
3-4 & 1.078183 & 0.948273 & 1.708822 & 1.758387 \\
4-5 & 1.027290 & 0.985352 & 1.794456 & 1.657899 \\
5-6 & 1.016702 & 0.997996 & 1.831198 & 1.514376 \\
6-7 & 1.033565 & 1.007509 & 1.864317 & 1.631527 \\
7-8 & 1.101321 & 1.034964 & 1.936853 & 1.702538 \\
8-9 & 1.338363 & 1.160921 & 2.210162 & 1.747642 \\

\bottomrule 
\end{tabular}
\end{table}

\begin{figure}[p]
	\centering
	\includegraphics[trim = 38mm 80mm 30mm 70mm, clip, width=0.5\textwidth]{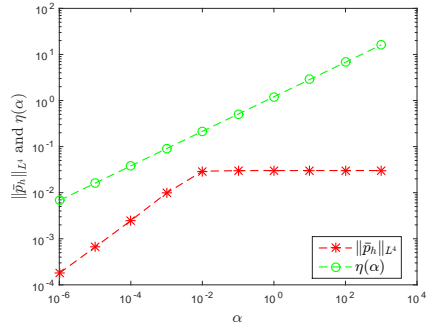}
	\caption{$\|\bar p_h\|_{L^4}$ and $\eta(\alpha)$ vs. $\alpha$.}
	\label{figure:unique}
\end{figure}

\begin{figure}[p]
        \centering
        \begin{subfigure}[h!]{0.5\textwidth}
                  \includegraphics[trim = 40mm 80mm 30mm 70mm, clip, width=\textwidth]{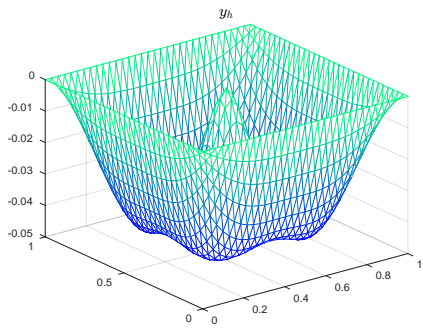}
                \caption{The optimal state $\bar y_h$.}
        \end{subfigure}~
        \begin{subfigure}[h!]{0.5\textwidth}
                  \includegraphics[trim = 40mm 80mm 30mm 70mm, clip, width=\textwidth]{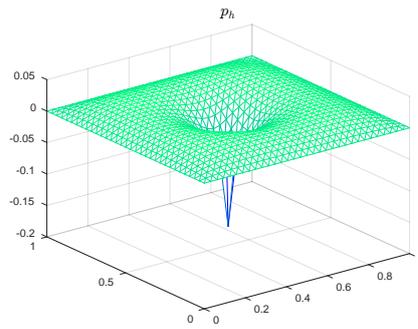}
                \caption{The adjoint state $\bar p_h$.}
        \end{subfigure}
        
        \begin{subfigure}[h!]{0.5\textwidth}
                  \includegraphics[trim = 40mm 80mm 30mm 70mm, clip, width=\textwidth]{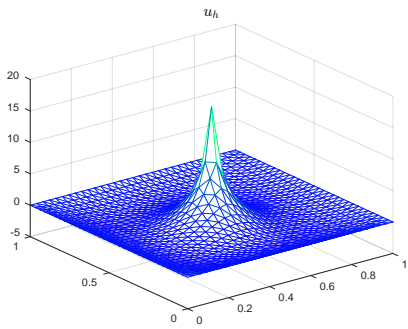}
                \caption{The optimal control $\bar u_h$.}
        \end{subfigure}~
        \begin{subfigure}[h!]{0.5\textwidth}
                  \includegraphics[trim = 40mm 80mm 30mm 70mm, clip, width=\textwidth]{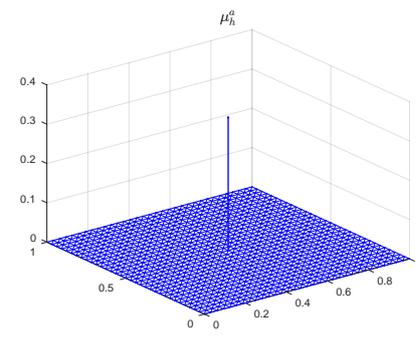}
                \caption{The multiplier $\bar \mu^a_h=0.3386 \, \delta_{\tilde{x}}$,  $\tilde{x}:=(\frac{1}{2},\frac{1}{2})$.}
        \end{subfigure}
        
        \caption{The unique global minimum together with its state and the associated multipliers.}
        \label{figure: example y3 case(NeitzelExample)}
\end{figure}

\begin{figure}[p]
        \centering
        \begin{subfigure}[h!]{0.90\textwidth}
                \includegraphics[trim = 40mm 85mm 40mm 90mm, clip,width=\textwidth]{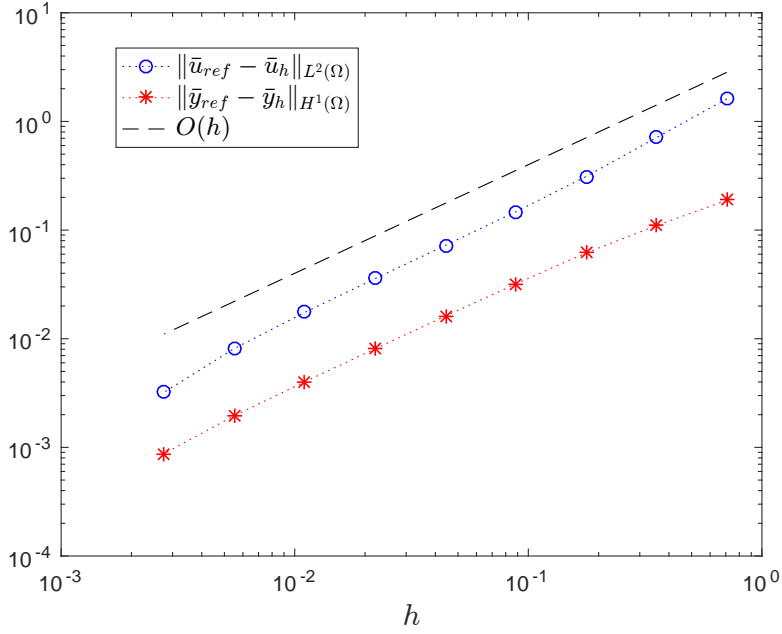}
                \caption{ $E_{u_{L2}}(h)$ and  $E_{y_{H1}}(h)$ v.s. $h$.}
        \end{subfigure}\\%
        \begin{subfigure}[h!]{0.90\textwidth}
               \includegraphics[trim = 40mm 85mm 40mm 85mm, clip,width=\textwidth]{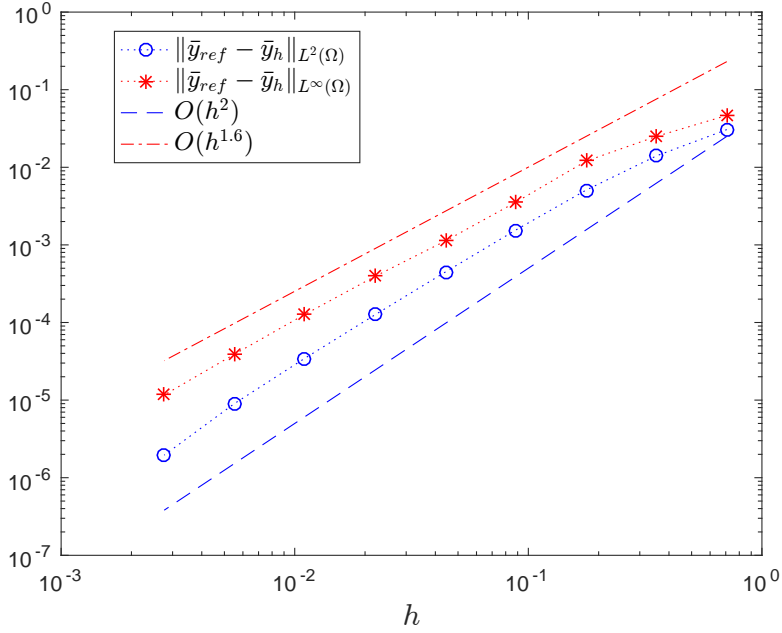}
                \caption{$E_{y_{L2}}(h)$ and  $E_{y_{L\infty}}(h)$ v.s. $h$.}
        \end{subfigure}%
        
        \caption{Errors for the optimal control and its state  versus the mesh size.}
        \label{F3}
\end{figure}

\bibliographystyle{plain}
\bibliography{references}
\end{document}